\newtheorem{theorem}   {Theorem}[section]
\newtheorem{proposition}  {Proposition}[section]
\newtheorem{lemma} {Lemma}[section]
\newtheorem{remark}   {Remark}[section]
\newtheorem{definition}  {Definition}[section]
\begin{document}

\title[Solutions of  elliptic equations in tubes ]{Solutions of semilinear elliptic
equations in tubes}

\author[F.\ Pacard]{Frank Pacard $^+$}
\address{
Centre de Math\'ematiques Laurent Schwartz, \'Ecole Polytechnique, 91128 Palaiseau, France}
\email{frank.pacard@math.polytechnique.fr}

\author[F.\ Pacella]{Filomena Pacella $^*$}
\address{
Dipartimento di Matematica,  Universit\`{a} "Sapienza" di Roma, P.le. Aldo Moro 2, 00184 Rome, Italy}
\email{pacella@mat.uniroma1.it}

\author[B.\ Sciunzi]{Berardino Sciunzi$^\diamond$}
\address{
Dipartimento di Matematica,
Universit\`a della Calabria,
Ponte Pietro Bucci 31B, 87036 Cosenza, Italy}
\email{sciunzi@mat.unical.it}
\thanks{\it 2010 Mathematics Subject
 Classification: 35B40;35J91;35B09;35P15;35A01}

\thanks{The first author is partially supported by the ANR-08-BLANC-0335-01\,grant. The second author is partially supported by PRIN-2009-WRJ3W7\,grant.
The third author is partially supported by PRIN-2009\,grant: {\em Metodi Variazionali e Topologici
nello Studio di Fenomeni non Lineari}, and ERC-2011-grant: \emph{Elliptic PDE's and symmetry of interfaces and layers for odd nonlinearities.}}

\maketitle

\begin{abstract}
Given a smooth compact $k$-dimensional manifold $\Lambda$ embedded in $\mathbb {R}^m$, with $m\geq 2$ and $1\leq k\leq m-1$, and given $\epsilon>0$, we define $B_\epsilon (\Lambda)$ to be the geodesic tubular neighborhood of radius $\epsilon$ about $\Lambda$. In this paper, we construct positive solutions of  the semilinear elliptic equation
\begin{equation}\nonumber
\left\{
\begin{array}{rlllll}
\Delta u  + u^p &=& 0     \qquad \quad {\rm in} \quad B_\epsilon (\Lambda) \\[3mm]
 u & = & 0 \quad\qquad {\rm on} \quad \partial B_\epsilon (\Lambda)  ,
\end{array}
\right.
\end{equation}
when the parameter $\epsilon$ is chosen small enough.  In this equation, the exponent $p$ satisfies either $p > 1$ when $n:=m-k \leq 2$ or $p\in (1, \frac{n+2}{n-2})$ when $n>2$. In particular $p$ can be critical or supercritical in dimension $m\geq 3$. As $\epsilon$ tends to $0$, the solutions we construct have Morse index tending to infinity. Moreover, using a Pohozaev type argument, we prove that our result is sharp in the sense that there are no positive solutions for $p>\frac{n+2}{n-2}$, $n\geq 3$, if $\epsilon$ is sufficiently small.
\end{abstract}

\section{Introduction}

Assume that we are given $\Lambda$, a smooth compact $k$-dimensional submanifold which is  embedded in $\mathbb R^m$, where $m\geq 2$ and $k\in \{1, \ldots , m-1\}$. For all $\epsilon>0$, we define the plain tubular neighborhood of radius $\epsilon$, centered about $\Lambda$ by
\begin{equation}
\label{problemdariso}
B_\epsilon  (\Lambda)  : = \left\{  x \in \mathbb R^m  \, : \, {\rm dist} (x, \Lambda) < \epsilon \right\},
\end{equation}
where $x \mapsto {\rm dist} (x,Y)$ denotes the Euclidean distance in $\mathbb{R}^m$ from $x$ to $Y$. Let us observe that, for all $\epsilon$ small enough, the boundary  of $B_\epsilon (\Lambda)$, which is defined by
\[
T_\epsilon (\Lambda) : = \{ x \in \mathbb R^m  \, :  \,  {\rm dist} (x, \Lambda) = \epsilon \},
\]
is a smooth embedded hypersurface in $\mathbb{R}^m$.

The aim of the paper is to show, for $\epsilon$ small, the existence of a new family of positive solutions of the semilinear elliptic problem
\begin{equation}
\label{mainproblemepsilon}
\left\{
\begin{array}{rlllll}
\Delta u  + u^p &=& 0  \qquad \quad {\rm in} \quad B_\epsilon (\Lambda)  \\[3mm]
u & = & 0 \quad\qquad {\rm on} \quad T_\epsilon (\Lambda)  .
\end{array}
\right.
\end{equation}
Note that, in this paper $\Delta$ will  always represent the Laplace operator in $\mathbb{R}^n$.

To state precisely our result we need some preliminaries. Let
\[
n  : = m-k \geq 1,
\]
denote the codimension of $\Lambda$ in $\mathbb{R}^m$. It is well known that, if $B_1^n$ denotes the $n$-dimensional unit ball in $\mathbb{R}^n$, there exists a positive solution $U$ of
\begin{equation}\label{problemradialguida}
\left\{
\begin{array}{rlllll}
\Delta U+ U^p & = & 0  \qquad {\rm in} \quad B_1^n \\[3mm]
 	   U & = & 0 \qquad {\rm on} \quad \partial B_1^n ,
\end{array}
\right.
\end{equation}
provided the exponent $p$ is chosen to satisfy $1<p<+\infty$ when $n\leq 2$ or $p\in(1,\frac{n+2}{n-2})$ when $n\geq 3$. Thanks to Gidas-Ni-Nirenberg's Theorem \cite{GNN}, the function $U$ is  known to be radially symmetric. Moreover, this solution is known to be unique, nondegenerate (we refer to Theorem 4.1 and Theorem 4.2 in \cite{DGP} for a proof of this fact) and to have Morse index equal to $1$.

For all $\epsilon >0$, we define
\begin{equation}\label{defspread1}
\bar u_\epsilon(x) :=  \epsilon^{-\frac{2}{p-1}} U \left( \frac{\text{dist} (x, \Lambda)}{\epsilon} \right)  ,
\end{equation}
for all $x \in B_\epsilon (\Lambda)$. This function is obtained by translating a rescaled copy of $U$ along the manifold $\Lambda$. With these notations at hand, we have the following~:

\begin{theorem}
\label{maintheoremenunciato}
Assume that $$
p\in(1, + \infty), \quad  \mbox{if} \quad  n\leq 2 ,\quad \mbox{or} \quad  p\in \left( 1, \frac{n+2}{n-2} \right), \quad \mbox{if} \quad n\geq 3,
$$
where $n:=m-k$. Then, there exists $\bar \epsilon>0$ and  $\mathcal S \subset (0, \bar\epsilon)$ such that~:

\begin{enumerate}
\item For all $\epsilon \in \mathcal S$, there exists a positive solution $u_\epsilon$ of \eqref{mainproblemepsilon} satisfiying
\begin{equation}
\lim_{\epsilon \to 0, \,  \epsilon \in \mathcal S} \left\| \frac{u_\epsilon}{\bar u_\epsilon} \right\|_{L^\infty (B_\epsilon(\Lambda))}  = 1.
\label{eq:pa0}
\end{equation}

\item For all $\alpha \geq 1$,
\begin{equation}
\lim_{\epsilon \to 0}\frac{1}{\epsilon^\alpha} \left( \epsilon -  \mbox{\rm meas } \mathcal S_\epsilon \right) =0,
\label{pa:1}
\end{equation}
where $\mathcal S_\epsilon : = \mathcal S \cap (0,\epsilon)$.
\item As $\epsilon \in \mathcal S$ tends to $0$, the Morse index of $u_\epsilon$ tends to infinity.
\end{enumerate}
\end{theorem}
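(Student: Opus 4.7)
The plan is to perform a Lyapunov--Schmidt reduction in Fermi coordinates adapted to $\Lambda$, coupled with a small-divisor argument that excises a thin set of resonant values of $\epsilon$. I would introduce Fermi coordinates $(y, z) \in \Lambda \times \mathbb{R}^n$ in a neighborhood of $\Lambda$, rescale $z = \epsilon \xi$ so that $B_\epsilon(\Lambda)$ is parametrized by $\Lambda \times B_1^n$, and set $u(y, z) = \epsilon^{-2/(p-1)} v(y, \xi)$. Equation \eqref{mainproblemepsilon} then takes the form
\[
\Delta_\xi v + v^p + \epsilon^2 \Delta_\Lambda v + \epsilon \, \mathcal{P}_\epsilon(y, \xi; v, \partial v, \partial^2 v) = 0 \quad \text{in } \Lambda \times B_1^n,
\]
with $v = 0$ on $\Lambda \times \partial B_1^n$, where $\mathcal{P}_\epsilon$ gathers the second-fundamental-form corrections of the Fermi metric and admits a smooth expansion in powers of $\epsilon$. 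The ansatz $v \equiv U(|\xi|)$ corresponds to $\bar u_\epsilon$ and solves the $\epsilon = 0$ problem exactly; correcting $v$ order by order in $\epsilon$ one constructs, for each integer $N \geq 1$, an approximate solution $v_{\epsilon, N}$ whose residual is of size $O(\epsilon^N)$ in a suitable weighted H\"older norm.

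The core of the argument is the analysis of the linearized operator
\[
\mathbb{L}_\epsilon := \Delta_\xi + p U^{p-1} + \epsilon^2 \Delta_\Lambda + \epsilon \, \mathcal{Q}_\epsilon,
\]
acting on functions vanishing on $\Lambda \times \partial B_1^n$. Since $U$ is radial, the cross-section operator $L_0 := \Delta_\xi + p U^{p-1}$ is rotationally invariant and, by the nondegeneracy and Morse-index-one properties of $U$ recalled above, has discrete spectrum $\lambda_1 > 0 > \lambda_2 > \lambda_3 > \ldots$ with no zero eigenvalue. Expanding along an $L^2(\Lambda)$-orthonormal basis $\{\varphi_j\}$ of eigenfunctions of $-\Delta_\Lambda$ with eigenvalues $0 = \mu_0 < \mu_1 \leq \mu_2 \leq \ldots \to \infty$, the leading part of $\mathbb{L}_\epsilon$ decouples into the family $\{L_0 - \epsilon^2 \mu_j\}_{j \geq 0}$ on $B_1^n$, whose invertibility fails precisely on the discrete resonant set $\epsilon_j := \sqrt{\lambda_1/\mu_j}$, $j \geq 1$, which accumulates at $0$. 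I would then define $\mathcal{S}$ as the complement in $(0, \bar\epsilon)$ of a super-polynomially small neighborhood of $\{\epsilon_j\}$, chosen so that $|\lambda_1 - \epsilon^2 \mu_j| \geq \epsilon^N$ for every $j$; the Weyl asymptotic $\mu_j \sim c j^{2/k}$ together with a direct summation yields the measure bound \eqref{pa:1}, and a standard Fredholm perturbation argument absorbs $\mathcal{Q}_\epsilon$ to produce a right inverse $\mathbb{G}_\epsilon$ of $\mathbb{L}_\epsilon$ in weighted H\"older spaces with operator norm at most $\epsilon^{-N'}$.

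With the linear theory in hand, part (1) follows from a Banach fixed point: writing $v = v_{\epsilon, N} + \phi$ with $N$ chosen much larger than $N'$, the equation for $\phi$ reduces to $\phi = \mathbb{G}_\epsilon(E_\epsilon + \mathcal{N}_\epsilon(\phi))$, with $\|E_\epsilon\| = O(\epsilon^N)$ and $\mathcal{N}_\epsilon$ superlinear in $\phi$, which contracts on a small ball and yields $u_\epsilon$ satisfying \eqref{eq:pa0}. Part (3) is then read off from the same mode decomposition: modulo $O(\epsilon)$ spectral corrections, the Morse index of $u_\epsilon$ equals $\#\{j \geq 0 : \mu_j < \lambda_1 / \epsilon^2\}$, which is $\sim c\, \epsilon^{-k}$ by Weyl's law and tends to infinity. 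The main obstacle is the simultaneous calibration of three parameters: the order $N$ of the approximation, the blow-up exponent $N'$ of the inverse on $\mathcal{S}$, and the widths of the intervals excised around each $\epsilon_j$. A naive $O(\epsilon)$ residual would be destroyed by $\mathbb{G}_\epsilon$, so the improved ansatz $v_{\epsilon, N}$ is essential; the widths must then be tuned small enough to make both the contraction estimate and the measure bound \eqref{pa:1} hold simultaneously, and this balance is the heart of the proof.
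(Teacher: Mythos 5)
Your overall architecture matches the paper's: Fermi coordinates and rescaling, an order-by-order improved ansatz with $O(\epsilon^N)$ residual (the paper builds it as an explicit iteration scheme solving Dirichlet problems in the normal fiber), a mode decomposition identifying the resonant values $\epsilon_j = \sqrt{-\mu_0/\lambda_j}$, a Weyl-law count of $\sim \epsilon^{-k}$ modes below scale $\epsilon^{-2}$ for both the measure bound and the Morse index, and a final Banach fixed point balancing the approximation order against the blow-up of the inverse. However, the crucial step is wrong as stated.

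The gap is the sentence ``a standard Fredholm perturbation argument absorbs $\mathcal{Q}_\epsilon$ to produce a right inverse $\mathbb{G}_\epsilon$''. You excise $\epsilon$ so that the \emph{model} operator $L_0 + \epsilon^2 \Delta_\Lambda$ has its spectrum at distance $\gtrsim \epsilon^{N}$ from zero, giving $\|\mathcal{L}_\epsilon^{-1}\| \lesssim \epsilon^{-N'}$ with $N'$ large. But the correction $\epsilon\,\mathcal{Q}_\epsilon$ is a second-order operator whose relative size compared to $L_0$ is $O(\epsilon)$ (the Fermi-metric error is $O(|z|) = O(\epsilon|\xi|)$), so it can shift eigenvalues by $O(\epsilon)$ --- vastly more than the guaranteed gap $\epsilon^{N}$. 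A Neumann series requires $\|\mathcal{L}_\epsilon^{-1}(\epsilon\,\mathcal{Q}_\epsilon)\| < 1$, which reads $\epsilon^{1-N'} < 1$ and fails for every $N' > 1$. Separation of variables is also unavailable for $\mathcal{Q}_\epsilon$, so one cannot simply perturb mode by mode either. In short, the set $\mathcal{S}$ built from the spectrum of the model operator gives you no control over the spectrum of the actual linearization near zero, and your ``standard perturbation'' step is exactly the difficulty the paper is written to overcome.

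The paper's route around this is genuinely different and you should keep it in mind: it defines the excluded set $\mathcal{Z}_{\epsilon,i}$ directly from the kernel of the true linearized operator $\tilde L_{\epsilon,i} = a\,L_{\epsilon,i}$ (conjugated to be self-adjoint w.r.t.\ the product volume), then establishes (a) a Morse index bound $\mathrm{Index}\,L_{\epsilon,i} \lesssim \epsilon^{-k}$ by comparing quadratic forms, (b) a decomposition lemma showing eigenfunctions with eigenvalue $\lesssim \epsilon^{-2}$ are $L^2$-close to $\phi_0(|z|/\epsilon)\psi(y)$, and (c) via Kato's formula for $\partial_\epsilon \nu$ along families of eigenvalues, the strict monotonicity $\partial_\epsilon \nu \geq C_2/\epsilon^{3}$ for those small eigenvalues. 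Monotonicity ensures each of the $\lesssim \epsilon^{-k}$ low eigenvalues crosses zero at most once, which gives both the measure estimate for $\mathcal{Z}_{\epsilon,i}$ and the lower bound $|\nu|\gtrsim \epsilon^{N-3}$ on the good set, hence $\|\tilde L_{\epsilon,i}^{-1}\|\lesssim \epsilon^{3-N}$. That quantitative control on how the \emph{genuine} eigenvalues move with $\epsilon$ is what replaces your perturbation step, and it is the heart of the proof; without it the calibration of $N$, $N'$, and the interval widths that you rightly identify as delicate cannot actually be carried out.

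Two smaller remarks. First, your measure count excises intervals of length $\epsilon^{N}$ in eigenvalue; translated into $\epsilon$-length (using $\partial_\epsilon(\epsilon^2\mu_j)\sim \epsilon^{-1}$ for the relevant $j$) they are of length $\sim\epsilon^{N+1}$, so the exponent in \eqref{pa:1} comes out as $N+1-k$ rather than $N-k$ --- harmless, but worth tracking. Second, the claim that the Morse index of $u_\epsilon$ equals that of the model ``modulo $O(\epsilon)$ spectral corrections'' again presupposes exactly the spectral stability that is at issue; the paper instead gets the Morse index bound from the quadratic-form comparison of Lemma~\ref{ND1}, and the lower bound (index $\to\infty$) from the eigenvalue formula $\mu_0/\epsilon^2+\lambda_j$ for $\mathcal{L}_\epsilon$ combined with the decomposition lemma.
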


Let us briefly comment on our result and in particular on the structure of the set $\mathcal S$ in which the parameter $\epsilon$ can be chosen. As will be apparent in the proof, our construction does not hold for all values of the parameter $\epsilon$ close to $0$. There is a resonance phenomenon which prevents the construction to hold for any small value of $\epsilon$ and which forces $\epsilon$ to be taken away from a set of small density close to $0$. This is precisely the meaning of (\ref{pa:1}). Such a phenomenon  is not new and, in the context of semilinear partial differential equations, it was originally found by A. Malchiodi and M. Montenegro in \cite{Mal-Mon-1}. Since this seminal paper, this phenomenon has also been found in other instances, for example in the study of other semilinear partial differential equations \cite{Mah-Mal, Mal-Mon-2} or in the study of constant mean curvature surfaces \cite{Mah-Maz-Pac, Maz-Pac}.

The Morse index of $u_\epsilon$ is defined to be equal to the dimension of the subspace of $H^1_0(B_\epsilon (\Lambda))$ over which the quadratic form
\[
v \mapsto \int_{B_\epsilon (\Lambda)} \left( |\nabla v|^2 - p   u_\epsilon^{p-1}   v^2\right) dx ,
\]
is definite negative. The fact that we are not able to construct the solutions for all values of $\epsilon$ close enough to $0$ is also reflected in another important feature of our solutions, namely that their Morse index tends to infinity as $\epsilon$ tends to $0$.

In the same way, recall that when $p=\frac{m+2}{m-2}$ and $k=m-1$ it has been proved in \cite{AMPAC} that the energy and the Morse index of all positive solutions tend to infinity as $\epsilon$ tends to $0$.

The shape of the solution we construct is also worth mentioning, in fact, the solution $u_\epsilon$ is close to the function $\bar u_\epsilon$ which has been defined in \eqref{defspread1} and hence it does not concentrate at points as $\epsilon\rightarrow 0$. Also,  by \eqref{defspread1}, we have
\[
\|\bar u_\epsilon\|_{L^\infty( B_\epsilon (\Lambda))} = \mathcal O (\epsilon^{-\frac{2}{p-1}}),
\]
as $\epsilon$ tends to $0$.

In the particular case where the exponent $p$ is the critical Sobolev exponent i.e. when $p=\frac{m+2}{m-2}$, a well known theorem by A. Bahri and J.M. Coron \cite{BC} yields the existence of positive solutions of \eqref{mainproblemepsilon}, provided that the topology of the domain is not trivial. In this case, our result can be seen as a direct construction of a positive solution, via a technique which also gives the shape of the solution, that cannot be deduced from the proof in \cite{BC}.

The solutions we construct are also new in the subcritical case ($p<\frac{m+2}{m-2}$) since they are qualitatively different from the so called multibump solutions which were found in \cite{ACP,DY} and which do not satisfy (\ref{eq:pa0}).

Let us observe that the result of Theorem \ref{maintheoremenunciato} holds for  supercritical exponents, namely exponents which are larger than the critical Sobolev exponent  $\frac{m+2}{m-2}$ in dimension $m$. In the particular case where the codimension $n$ of the manifold $\Lambda$ is equal to $1$ or $2$, the exponent $p$ can be taken to be arbitrarily large. To our knowledge this is the first existence result for solutions of \eqref{mainproblemepsilon} defined in tubular neighborhoods of general $k$-dimensional manifolds. A previous result has been recently obtained for $(m-1)$-dimensional manifolds in \cite{BCGP} and it was indeed a source of inspiration for the present paper.

As it will become clear in the proof, the smoothness of the submanifold $\Lambda$ is a key ingredient of the proof. However, close inspection also shows that this assumption can be relaxed if one is ready to loose some control on the density of the set $\mathcal S$ close to $0$. Indeed, we have the~:
\begin{proposition}
Under the assumptions of Theorem~\ref{maintheoremenunciato}, if in point (2) of Theorem~\ref{maintheoremenunciato} we fix $\alpha \geq 1$, then there exists $l \in \mathbb N$ only depending on $n$ and $\alpha$, such that the conclusion of Theorem~\ref{maintheoremenunciato} holds provided $\Lambda$ is at least a $\mathcal C^l$ submanifold. The larger $\alpha$ is, the larger $l$ has to be chosen.
\end{proposition}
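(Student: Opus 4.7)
The strategy is to revisit the proof of Theorem~\ref{maintheoremenunciato} and keep track, at every stage, of how many derivatives of $\Lambda$ are actually needed. The smoothness of $\Lambda$ enters the construction through three distinct places: (i) the definition of Fermi-type coordinates in a tubular neighborhood of $\Lambda$ and the associated expansion of the Euclidean metric pulled back to the normal bundle; (ii) the iterative construction of an approximate solution $\bar u_\epsilon^{(N)}$, obtained by successively correcting $\bar u_\epsilon$ so that $\Delta \bar u_\epsilon^{(N)} + (\bar u_\epsilon^{(N)})^p$ is of size $\mathcal O(\epsilon^N)$ in a suitable weighted norm; and (iii) the spectral analysis of the linearized operator at $\bar u_\epsilon^{(N)}$ that identifies the resonant values of $\epsilon$. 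If $\Lambda$ is only $\mathcal C^l$, then the Fermi coordinates are $\mathcal C^{l-1}$ and the coefficients appearing in the pulled-back Laplacian are $\mathcal C^{l-2}$, which is the bottleneck for (i).

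The central observation is that the construction in (ii) is an induction in which, at each step, one computes a new correction by solving a linear equation on the cross-section $B_1^n$ with a forcing term built out of the previous iterates, the curvatures of $\Lambda$, and their derivatives. Each additional iteration consumes a fixed number $c=c(n)$ of derivatives of $\Lambda$. Consequently, if $\Lambda\in\mathcal C^l$, the iterative scheme can be pushed to order
\[
N \;=\; N(l,n) \;=\; \bigl\lfloor (l-c_0)/c\bigr\rfloor,
\]
for some fixed $c_0=c_0(n)$ encoding the regularity lost in (i), and no further correction is possible within the $\mathcal C^l$ category. Equivalently, reaching any prescribed order $N$ is possible provided $l\geq l_0(N,n)$.

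Once an approximate solution accurate to order $\epsilon^N$ is available, the fixed point argument of Theorem~\ref{maintheoremenunciato} reduces the problem to inverting the linearized operator on the orthogonal complement of the finite-dimensional space of small eigenfunctions responsible for the resonance. On the complement of a set of $\epsilon$ of measure controlled by $\epsilon^{\gamma}$, the inverse has norm at most $\epsilon^{-\beta}$, with $\beta$ and $\gamma$ determined by the spectral analysis (iii) and independent of the smoothness of $\Lambda$. The contraction property closes as soon as $N-\beta$ is sufficiently large, and the density statement \eqref{pa:1} with a prescribed $\alpha\geq 1$ requires, in addition, that $N$ be chosen so that the error left after correction is smaller than $\epsilon^{\alpha}$. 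Both requirements force $N\geq N_*(\alpha,n)$, and hence, by the preceding step, $l\geq l(\alpha,n):=l_0(N_*(\alpha,n),n)$, which is monotone in $\alpha$.

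The main obstacle I anticipate is the careful bookkeeping required in step (ii): in the $\mathcal C^\infty$ case the iteration can be continued indefinitely and all estimates are performed in smooth categories, whereas in the $\mathcal C^l$ case one has to specify weighted H\"older (or weighted Sobolev) spaces on the tube whose indices shift at each iteration, verify that the linear solvability lemmas used in Theorem~\ref{maintheoremenunciato} remain valid in these finite-regularity spaces, and track precisely the number of derivatives lost in lifting the cross-section solutions back to the tube. Once this is done, no fundamentally new idea is needed, and an explicit value of $l(\alpha,n)$ can be read off from the computation.
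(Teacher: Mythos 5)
Your proposal is correct and follows essentially the same route the paper intends: the paper does not give a self-contained proof of this proposition but instead flags, throughout Sections 3--7, exactly where smoothness of $\Lambda$ is used and how much of it is consumed (in particular, the remark in Section 4 that each iteration of the scheme \eqref{eq:4.12} loses two degrees of regularity in the $\Lambda$-directions, and the remark in Section 7 that one must take $i > 2(N+N_0)-3$ with $N > k + \alpha$). You have correctly identified the three points where $\mathcal C^l$ regularity enters (Fermi coordinates, the iterative improvement of the approximate solution, and the spectral estimates on $L_{\epsilon,i}$), the fact that each iteration costs a fixed number of derivatives, and the chain $\alpha \to N \to i \to l$ that makes $l$ a function of $\alpha$; your anticipated difficulty (precise bookkeeping of which H\"older/weighted spaces are used at each stage) is precisely the part the paper leaves to the reader.
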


Let us emphasize on the fact that the scheme of the proof is not new and in fact it is inspired from \cite{Mal-Mon-1} and \cite{Mah-Maz-Pac}. However, our framework is simpler and we hope that this will help the interested reader to understand the ideas and techniques in these more involved works.\\

When $n\geq 3$, the existence result of Theorem~\ref{maintheoremenunciato} holds under the assumption that
\[
p < \frac{n+2}{n-2}.
\]
Note that $\frac{n+2}{n-2}$ is the critical Sobolev exponent  in dimension $n$ and observe that this assumption is used to construct the approximate solution $\bar u_\epsilon$ to \eqref{mainproblemepsilon}. One might wonder whether if this condition is only technical. As we will see this is not the case and the existence of positive solutions to \eqref{mainproblemepsilon} generally fails if $p>\frac{n+2}{n-2}$ as we prove by a Pohozaev type argument.

\begin{theorem}\label{nonexistence}
Assume that $n = m- k \geq 3$ and that
\[
p>\frac{n+2}{n-2}.
\]
Then, there exists $\bar \epsilon=\bar \epsilon(p)>0$, such that for all $\epsilon\in (0, \bar\epsilon)$ there is no bounded positive solution of \eqref{mainproblemepsilon} in $B_\epsilon (\Lambda)$.
\end{theorem}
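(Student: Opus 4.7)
The plan is to carry out a Pohozaev-type argument using a vector field that is ``radial'' only in the fibers normal to $\Lambda$. For $\epsilon$ small enough that the nearest-point projection $\pi\colon B_\epsilon(\Lambda)\to\Lambda$ is well defined and smooth, I would set
$$
X(x) := x - \pi(x), \qquad x \in B_\epsilon(\Lambda),
$$
so that $|X(x)| = \mathrm{dist}(x,\Lambda)$. Since the outward unit normal to $T_\epsilon(\Lambda)$ equals $\nabla\,\mathrm{dist}(\cdot,\Lambda) = X/|X|$, one has $X\cdot\nu = \epsilon$ on $T_\epsilon(\Lambda)$. Working in Fermi coordinates along $\Lambda$, I would verify the uniform asymptotic expansions
$$
\mathrm{div}\,X = n + O(\epsilon), \qquad (DX)(v,v) = |v^N|^2 + O(\epsilon)\,|v|^2 \quad\text{on } B_\epsilon(\Lambda),
$$
where $v^N$ denotes the component of $v$ normal to $\Lambda$ at $\pi(x)$. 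On $\Lambda$ itself one has $DX = P_N$ (the orthogonal projection onto the normal bundle) exactly, the $O(\epsilon)$ corrections being controlled by the extrinsic curvatures of $\Lambda$.

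Next, I would multiply $-\Delta u = u^p$ by $X\cdot\nabla u$ and integrate by parts on $B_\epsilon(\Lambda)$; standard elliptic regularity makes a bounded positive solution $u$ of class $C^{2,\alpha}$ up to the boundary, so the computation is rigorous. Using $u = 0$ on $T_\epsilon(\Lambda)$ (so that $\nabla u = (\partial_\nu u)\,\nu$ there), one obtains the identity
$$
\int (DX)(\nabla u,\nabla u)\,dx - \tfrac{1}{2}\int\mathrm{div}(X)\,|\nabla u|^2\,dx - \tfrac{\epsilon}{2}\int_{T_\epsilon(\Lambda)}(\partial_\nu u)^2\,dS = -\tfrac{1}{p+1}\int\mathrm{div}(X)\,u^{p+1}\,dx.
$$
Combining this with the elementary energy identity $\int|\nabla u|^2\,dx = \int u^{p+1}\,dx$, the pointwise splitting $|\nabla u|^2 = |\nabla^N u|^2 + |\nabla^T u|^2$ into normal and tangential parts along $\Lambda$, and the expansions above, the identity collapses to
$$
\Bigl(\frac{n}{p+1} - \frac{n-2}{2} + O(\epsilon)\Bigr)\int u^{p+1}\,dx = \int |\nabla^T u|^2\,dx + \frac{\epsilon}{2}\int_{T_\epsilon(\Lambda)}(\partial_\nu u)^2\,dS.
$$

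For $p > \frac{n+2}{n-2}$ the leading coefficient $\frac{n}{p+1} - \frac{n-2}{2}$ is strictly negative. Choosing $\bar\epsilon$ small enough that the $O(\epsilon)$ error is dominated by this coefficient keeps the left-hand side bounded above by $-c\int u^{p+1}\,dx$ for some $c>0$, while the right-hand side is manifestly non-negative. This forces $\int u^{p+1}\,dx = 0$, contradicting positivity. The main obstacle will be the rigorous derivation of the expansions of $\mathrm{div}\,X$ and $(DX)(v,v)$ in Fermi coordinates, with error terms that are genuinely $O(\epsilon)$ uniformly on $B_\epsilon(\Lambda)$ and with constants depending only on the $C^2$-geometry of $\Lambda$; once these expansions are established via the Weingarten map of $\Lambda$, the rest of the argument is essentially algebraic.
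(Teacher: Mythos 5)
Your proposal is correct and follows essentially the same approach as the paper: the vector field $X = x - \pi(x)$ is exactly $\nabla \phi$ for $\phi = \frac{1}{2}\,\mathrm{dist}(\cdot,\Lambda)^2$, which is the paper's Pohozaev multiplier, and your identity, your Fermi-coordinate expansions of $\mathrm{div}\,X$ and $DX$, and your sign analysis match the paper's Lemmas. The only difference is cosmetic: you retain the normal/tangential split $|\nabla u|^2 = |\nabla^N u|^2 + |\nabla^T u|^2$ so the favorable sign of the tangential and boundary contributions is manifest, whereas the paper encodes the same information through a trace-free Hessian estimate.
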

The proof of Theorem\ref{nonexistence} relies on a Pohozaev type identity which we derive for solutions of  \eqref{mainproblemepsilon}. This is a standard techniques which has been used in several nonexistence results and it goes back to \cite{Poh} where the case of star-shaped domains was considered. A similar idea was already used by D. Passaseo in \cite{Pas2}, with a more involved construction, leading to nonexistence results for superlinear elliptic problems in topologically nontrivial domains. In our case, the use of suitable coordinates, namely Fermi coordinates (see Section 3), proves to be extremely useful to get Theorem \ref{nonexistence} in a simple way. We emphasize that as in  \cite{Pas2}, our domains are not star-shaped and are not topologically trivial.

\section{Outline of the proof of Theorem~\ref{maintheoremenunciato}}

The proof of Theorem \ref{maintheoremenunciato} consists in showing that there exists a \emph{genuine solution} $u_\epsilon$ near the \emph{approximate solution} $\bar u_\epsilon$ defined in \eqref{defspread1} provided the parameter $\epsilon$ is chosen small enough and away from a set where resonance occurs. The main steps of the proof are the following~:

\begin{itemize}
\item [(i)]
First, we observe that
\[
\| \Delta \bar u_{\epsilon} +  \bar u_{\epsilon}^p \|_{L^\infty(B_\epsilon (\Lambda))} \leq C  \epsilon^{- \frac{p+1}{p-1}}.
\]
Then, using a finite step iteration scheme, we  improve the approximate solution $\bar u_\epsilon$ into a sequence of approximate solutions $(u_{\epsilon,i})_{i\in\mathbb N}$, which are as close as we want from a genuine solution of the equation in the sense that
\[
\| \Delta u_{\epsilon, i} + u_{\epsilon, i}^p \|_{L^\infty(B_\epsilon (\Lambda))} \leq C   \epsilon^{i - \frac{p+1}{p-1}}.
\]
Moreover, the sequence  $(u_{\epsilon,i})_{i\in \mathbb N}$ is constructed in such a way  that one has a good control on the difference  $u_{\epsilon,i} - \bar u_\epsilon$. As already mentioned,  the construction of $u_{\epsilon,i}$ relies on some iteration scheme and we will see that, in order to keep a good control on the sequence of approximate solutions, we need to allow a loss of regularity at each iteration. In particular, the fact that the sequence $(u_{\epsilon, i})_{i \in \mathbb N}$ exists for all $i \in \mathbb N$, uses the fact that the manifold $\Lambda$ is smooth in an essential way. If the submanifold $\Lambda$ has only finite regularity, the  sequence can just be constructed for a finite number of indices.

\item[(ii)]
Next, we study the linearized operator
\[
L_{\epsilon, i} : = \Delta + p u_{\epsilon, i}^{p-1},
\]
about the approximate solution $u_{\epsilon,i}$ and show that the norm of the inverse of $L_{\epsilon, i}$ can be controlled as $\epsilon$ tends to $0$, provided $\epsilon$ is taken away from a countable sequence tending to $0$. More precisely, we will see that the Morse index of $L_{\epsilon, i}$ tends to infinity as $\epsilon$ tends to $0$. In particular, for fixed $i\in \mathbb N$, the operator $L_{\epsilon, i}$ is not  invertible for a sequence of values $\epsilon$ tending to $0$, and, in order to proceed, we will need to take $\epsilon$ away from these values.

\item[(iii)]  Finally, we look for a genuine solution of \ref{mainproblemepsilon} in the form
\[
u_\epsilon=u_{\epsilon,i} + \varphi _\epsilon .
\]
At this stage, we show that we can rephrase the problem as a fixed point problem which can easily be solved using the fixed point theorem for contraction mappings.
\end{itemize}

The outline of the paper is the following. In section 3 we describe the notations we use and state  basic results about the structure of the Laplace operator in Fermi coordinates about $\Lambda$. In section 4, we construct the sequence of approximate solutions $(u_{\epsilon,i})_{i \in \mathbb N}$ and derive the relevant estimates. In Section 5 and 6, we analyze the spectrum and the uniform invertibility of $L_{\epsilon, i}$, the linearized operator about  $u_{\epsilon,i}$. In section 7, we complete the proof of Theorem \ref{maintheoremenunciato} by  reducing the problem to the solvability of a fixed point problem for contraction mappings. In section 8  we prove Theorem \ref{nonexistence}.

\section{Fermi coordinates near $\Lambda$}\label{preliminaries}

An important tool in the proof of Theorem~\ref{maintheoremenunciato} is the use of appropriate coordinates to parameterize $B_\epsilon (\Lambda)$. We identify $\Lambda$ with the zero section of $N\Lambda$ the normal bundle of $\Lambda$ and $B_\epsilon (\Lambda)$ will be identified with
\[
\Omega_\epsilon (\Lambda) : = \{ (y, z) \in  N\Lambda \,  : \,  y \in \Lambda, \quad z \in N_y \Lambda , \quad  | z | < \epsilon\},
\]
via the mapping
\[
\begin{array}{rccclll}
F_1  : & \Omega_\epsilon(\Lambda) & \to & B_\epsilon (\Lambda)\\[3mm]
& (y,z) & \mapsto & y + z .
\end{array}
\]
The normal bundle $N\Lambda$ is endowed with the metric induced by the embedding of $\Lambda$ in $\mathbb R^m$, namely
\[
\bar g = \mathring g  + g_z ,
\]
where  $\mathring g$ is the induced metric on $\Lambda$ and $g_z :=  dz^2$ the (Euclidean) metric on the normal fibers.

In a neighborhood of a given point $y \in \Lambda$, we can define a moving orthonormal frame
\[
e^1, \ldots , e^n \in N\Lambda  ,
\]
where each $e^j$ is a smooth section of the normal bundle $N\Lambda$. Namely, locally the vectors $e^1(y), \ldots, e^n (y)$ constitute an orthonormal basis of the normal space to $\Lambda$ at $y$ and $y \mapsto e^j(y)$ is a smooth vector field. A moving orthonormal frame might not be globally defined but it is always defined in a neighborhood of a given point in $\Lambda$.

We can then define $\Phi$, a local parametrization from a neighborhood of $(y,0) \in \Lambda \times \mathbb R^n$ into a neighborhood of $y \in \mathbb R^m$, by
\[
\Phi  (y, z_1, \ldots, z_n)  : =  y + \sum_{i=1}^n z_i   e^i (y) ,
\]
and $(y, z_1, \ldots, z_n)$ will be referred to as Fermi coordinates. In this parametrization, the Euclidean metric
\begin{equation}\label{HUhisdfjbvbdjbv}
g_{\circ} : =  dx_1^2 + \ldots + dx_m^2 ,
\end{equation}
in $\mathbb R^m$, or more precisely  $\Phi^* g_\circ$, the pull back of $g_\circ$ by $\Phi$, is close to $\bar g$ the induced metric on $N\Lambda$.
The next Lemma gives a quantitative version of this statement.
\begin{lemma}
\label{appendilemma1}
In the above defined coordinates,
\begin{equation}
\label{appendilemma1bis}
\Phi^* g_\circ  : = \bar g + \sum_{i=1}^n z_i   (\mathring h^i + 2     \mathring \ell^i ) + \sum_{i,j=1}^n z_i   z_j   \mathring k^{ij},
\end{equation}
where the tensors $\mathring h^i$, $\mathring \ell^i$  and $\mathring k^{ij}$ acting on $T\Lambda$, have coefficients which are smooth functions on $\Lambda$.
\end{lemma}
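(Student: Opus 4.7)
The plan is to compute $\Phi^* g_\circ$ directly by differentiating the explicit parametrization and then Taylor expanding in the normal coordinates $z=(z_1,\dots,z_n)$ about $z=0$. First I would introduce local coordinates $(y^1,\dots,y^k)$ on $\Lambda$ with coordinate vector fields $E_a := \partial_{y^a} y \in T_y\Lambda$, and differentiate $\Phi(y,z) = y + \sum_{i=1}^n z_i\,e^i(y)$ in $\mathbb R^m$ to get
\begin{equation*}
\partial_{y^a}\Phi = E_a(y) + \sum_{i=1}^n z_i\,\partial_{y^a}e^i(y), \qquad \partial_{z_i}\Phi = e^i(y).
\end{equation*}

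Second, I would assemble $\Phi^*g_\circ$ block by block, using that $\{e^i(y)\}$ is orthonormal and orthogonal to $T_y\Lambda$. The normal-normal block is $\langle \partial_{z_i}\Phi,\partial_{z_j}\Phi\rangle_{g_\circ} = \delta_{ij}$, and so contributes exactly $g_z = dz^2$. The mixed block $\langle \partial_{y^a}\Phi,\partial_{z_j}\Phi\rangle_{g_\circ}$ vanishes at $z=0$ since $E_a \perp e^j$; away from the zero section it is linear in $z$ with coefficients given by the normal connection $\langle\partial_{y^a}e^i, e^j\rangle$, which one kills (to the required order) by choosing the moving frame to be parallel in the normal connection at the chosen reference point of $\Lambda$. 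The tangential-tangential block expands as
\begin{equation*}
\langle \partial_{y^a}\Phi,\partial_{y^b}\Phi\rangle_{g_\circ} = \mathring g_{ab}(y) + \sum_{i=1}^n z_i\bigl(\langle\partial_{y^a}e^i, E_b\rangle + \langle E_a,\partial_{y^b}e^i\rangle\bigr) + \sum_{i,j=1}^n z_i z_j\,\langle\partial_{y^a}e^i,\partial_{y^b}e^j\rangle,
\end{equation*}
which is polynomial, hence exact, because $\Phi$ itself is affine in $z$.

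Third, I would read off the three tensors from the coefficients of this expansion. The constant term in $z$ gives $\mathring g$, so combining with the normal block yields $\bar g$. The coefficient of $z_i$ splits into a symmetric part, proportional to the scalar second fundamental form $\mathring h^i$ associated to the normal vector $e^i$ (using $\langle\partial_{y^a}e^i, E_b\rangle = -\langle e^i,\partial_{y^a}E_b\rangle$, which is $-h^i_{ab}$), and a remainder $2\mathring\ell^i$ coming from the (generally nonsymmetric) difference; the coefficient of $z_i z_j$ defines $\mathring k^{ij}$. Smoothness on $\Lambda$ of all the coefficients is immediate because $\Lambda$ and the moving frame are smooth.

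The computation is routine; the one real bookkeeping obstacle is the mixed tangent-normal block, which a priori is not purely of the form announced by (\ref{appendilemma1bis}). Handling it cleanly requires either a judicious choice of a normal-parallel frame at the reference point so that the linear mixed contribution drops out, or absorbing this contribution into the $z_iz_j$ tensors; once this is done, the remaining verification reduces to matching coefficients in a finite Taylor polynomial.
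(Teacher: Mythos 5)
Your computation of the three blocks of $\Phi^*g_\circ$ is correct and coincides with the paper's: the normal-normal block is $dz^2$, the tangent-tangent block has the polynomial expansion you write, and the mixed block is $\sum_i z_i\,\langle\partial_{y^a}e^i,e^j\rangle\,dy^a\,dz_j$. The gap is in how you then match these blocks to \eqref{appendilemma1bis}, and specifically in your treatment of the mixed block and your identification of $\mathring\ell^i$.

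You claim that the coefficient of $z_i$ in the tangent-tangent block ``splits into a symmetric part $\mathring h^i$ \dots and a remainder $2\mathring\ell^i$ coming from the (generally nonsymmetric) difference.'' But that coefficient, $\langle\partial_{y^a}e^i,E_b\rangle+\langle E_a,\partial_{y^b}e^i\rangle$, is already manifestly symmetric in $(a,b)$; there is no nonsymmetric remainder. In the paper, this whole symmetric expression \emph{is} $\mathring h^i$, and the tensor $\mathring\ell^i$ is something else entirely: it is the coefficient of $z_i$ coming from the \emph{mixed} $dy^a\,dz_j$ block, i.e.\ $\mathring\ell^i=\sum_{a,j}\langle\partial_{y^a}e^i,e^j\rangle\,dy^a\,dz_j$. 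Thus $\mathring\ell^i$ is not a symmetric two-tensor on $T\Lambda$ in the strict sense but a mixed one-form $\otimes$ one-form with a $dz$ leg, and the factor $2$ in $2\mathring\ell^i$ simply accounts for symmetrizing $dy^a\,dz_j$ with $dz_j\,dy^a$ in the metric.

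Your alternative of ``choosing the moving frame to be parallel in the normal connection at the chosen reference point'' to kill the mixed block does not work here: a normal-parallel frame makes $\langle\partial_{y^a}e^i,e^j\rangle$ vanish only \emph{at} the reference point, whereas \eqref{appendilemma1bis} is an identity on a full coordinate neighborhood of $\Lambda$, where the normal bundle's curvature obstructs vanishing. If you did succeed in killing the mixed block there would be no $\mathring\ell^i$ in the statement at all. Your other suggestion of absorbing the mixed block into the $z_iz_j$ tensors is also not available, since that block is linear in $z$, not quadratic. The correct (and simpler) move, which is what the paper does, is to leave the moving frame arbitrary and just read $\mathring\ell^i$ directly off the mixed block; smoothness of its coefficients on $\Lambda$ then follows as you say.
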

\begin{proof}
We denote by
\[
(t_1, \ldots t_k)\longmapsto Y(t_1, \ldots, t_k) ,
\]
a parametrization of $\Lambda$ close to a given point $y_\bullet$ and, without  loss of generality, we assume that $Y(0) = y_\bullet$.

To keep notations short, we agree that $e^j \circ Y$ is also denoted by $e^j$. Hence
\[
X (t_1, \ldots, t_k, z_1, \ldots , z_n) : =  Y(t_1, \ldots, t_k) + \sum_{i=1}^n z_i   e^i(t_1,  \ldots, t_k),
\]
is a parametrization on $\mathbb R^m$ close to $y_\bullet$.  To compute the coefficients of the Euclidean metric in these coordinates, it is enough to compute
\[
\partial_{t_a} X \cdot \partial_{t_b} X ,  \qquad  \partial_{t_a} X \cdot \partial_{z_j} X \qquad \mbox{and} \qquad  \partial_{z_i} X \cdot \partial_{z_j} X.
\]
Observe that $\partial_{t_a} Y$ is a tangent vector to $\Lambda$ while $e^j$ is a normal  vector to $\Lambda$ and hence $\partial_{t_a} Y \cdot e^j \equiv 0$.  Using this, it is easy to check that
\[
\partial_{t_a} X \cdot \partial_{t_b} X =  \partial_{t_a} Y \cdot \partial_{t_b} Y + \sum_{i=1}^n z_i   (\partial_{t_a} Y \cdot \partial_{t_b} e^i + \partial_{t_b} Y \cdot \partial_{t_a} e^i  ) + \sum_{i,j=1}^n z_i   z_j    \partial_{t_a} e^i \cdot \partial_{t_b} e^j ,
\]
\[
\partial_{t_a} X \cdot \partial_{z_j} X =  \sum_{i=1}^n z_i   \partial_{t_a} e^i \cdot e^j \qquad \mbox{and} \qquad
\partial_{z_i} X \cdot \partial_{z_j} X = \sum_{i,j=1}^n    e^i \cdot e^j .
\]
We set
\[
\mathring h^i : =  \sum_{a,b=1}^k  (\partial_{t_a} Y \cdot \partial_{t_b} e^i + \partial_{t_b}  Y \cdot \partial_{t_a} e^i  )   dt_a   dt_b ,
\]
\[
\mathring k^{ij} : = \sum_{a,b=1}^k \partial_{t_a} e^i \cdot \partial_{t_b} e^j   dt_a   dt_b \qquad \mbox{and}
\qquad \mathring \ell^i : = \sum_{a=1}^k \sum_{j=1}^n  \partial_{t_a} e^i \cdot e^j   dt_a   dz_b .
\]
Observe that these are smooth functions defined on $\Lambda$.

With these notations at hand, we can write
\[
\begin{array}{rllll}
g_\circ  & = & \displaystyle \sum_{a,b =1}^k \left( \mathring g_{ab} + \sum_{i=1}^n z^i \mathring h^i_{ab} + \sum_{i,j =1}^n z_i   z_j   \mathring k^{ij}_{ab} \right)   dt_{a}   dt_b
\\[3mm]
& + & \displaystyle 2 \sum_{a =1}^k    \sum_{j=1}^n   \left( \sum_{i=1}^n z_i   \mathring \ell^i_{aj}  \right)   dt_a   dz_j + \sum_{i=1}^n  dz_i^2 ,
\end{array}
\]
so the proof of \eqref{appendilemma1bis} is completed.
\end{proof}

Recall that, if on a given manifold $M$ the metric  tensor is given  in local coordinates by
\[
g = \sum_{i,j =1}^m g_{ij}   dx_i   dx_j ,
\]
then the Laplace-Beltrami operator is given by
\[
\Delta_g : =  \frac{1}{\sqrt{{\rm det}    g }}   \sum_{i,j=1}^m \partial_{x_i} \left( \sqrt{{\rm det}    g } \,\,  g^{ij}   \partial_{x_j}   \cdot   \right) ,
\]
where $g^{ij}$ are the coefficients of the inverse of the matrix $(g_{ij})_{ij}$.

Using this formula, together with the expansion in Lemma~\ref{appendilemma1}, we get the~:
\begin{lemma}\label{frmicordinates}
In a tubular neighborhood of $\Lambda$, the Euclidean Laplacian $\Delta$ can be decomposed as
\[
\Delta : = \sum_{i=1}^m \partial_{x_i}^2 =  \Delta_{\bar g} + D ,
\]
where $\Delta_{\bar g} =  \Delta_{\mathring g} + \Delta_{g_z} $ denotes the Laplace-Beltrami operator on $N\Lambda$ for the metric $\bar g = \mathring g + g_z$, and $D$ is a second order differential operator  which, in Fermi coordinates,  can be expanded as
\[
D  = \sum_{i=1}^n z_i    D^{(2)}_{i} +  D^{(1)} ,
\]
where $D^{(2)}_{i}$ (respectively $D^{(1)}$) are second order (respectively first order) partial differential operators whose coefficients are smooth and bounded in some fixed tubular neighborhood of $\Lambda$.
\label{le:4.1}
\end{lemma}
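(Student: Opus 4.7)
The plan is to apply the invariant coordinate formula for the Laplace--Beltrami operator, namely
\[
\Delta_g  = g^{AB} \partial_A \partial_B + \frac{1}{\sqrt{\det g}}   \partial_A\!\left(\sqrt{\det g}   g^{AB}\right) \partial_B,
\]
first with $g = \Phi^* g_\circ$ and then with $g = \bar g$, and to subtract. Here the indices $A,B$ range over all Fermi coordinates $(t_1,\ldots, t_k, z_1, \ldots, z_n)$. Since $\Delta$ in $\mathbb R^m$ coincides, in Fermi coordinates, with $\Delta_{\Phi^* g_\circ}$, this gives the decomposition $\Delta = \Delta_{\bar g} + D$ and it only remains to inspect the structure of $D$.

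The starting point is Lemma~\ref{appendilemma1}, which says that the coefficients of $(\Phi^* g_\circ)_{AB}$ are polynomials of degree at most $2$ in the normal variables $z_i$ with smooth coefficients in $y \in \Lambda$, and that $(\Phi^* g_\circ)_{AB} - \bar g_{AB}$ vanishes at $z = 0$. By compactness of $\Lambda$, there is a fixed tubular neighborhood on which $\Phi^* g_\circ$ is a small perturbation of $\bar g$; inverting the corresponding matrix by a Neumann series (or simply by Cramer's rule combined with Taylor's formula) produces a smooth expansion
\[
(\Phi^* g_\circ)^{AB}  =  \bar g^{AB} + \sum_{i=1}^n z_i   \tilde{P}^{i,AB}(y, z),
\]
with the $\tilde P^{i,AB}$ smooth and bounded in the tubular neighborhood. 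The same argument, applied to the determinant, yields
\[
\sqrt{\det \Phi^* g_\circ}  =  \sqrt{\det \bar g}   \Bigl(1 + \sum_{i=1}^n z_i   \sigma_i(y, z)\Bigr),
\]
for some smooth bounded functions $\sigma_i$.

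Subtracting the two Laplace--Beltrami operators then gives
\[
\Delta - \Delta_{\bar g}  =  \bigl((\Phi^* g_\circ)^{AB} - \bar g^{AB}\bigr)   \partial_A \partial_B  +  R^B   \partial_B,
\]
where $R^B$ collects the difference of the first-order coefficients. The second-order part becomes $\sum_{i=1}^n z_i   \tilde P^{i,AB}   \partial_A \partial_B$, which I set equal to $\sum_i z_i D^{(2)}_i$. Each $D^{(2)}_i$ is a second order operator with smooth coefficients, bounded in the tubular neighborhood by compactness of $\Lambda$. The first-order coefficient $R^B$ is built from products and derivatives of the smooth quantities $(\Phi^* g_\circ)^{AB}$, $\sqrt{\det \Phi^* g_\circ}$ and their $\bar g$ counterparts, so $D^{(1)} := R^B \partial_B$ is a first order operator with smooth bounded coefficients. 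This gives the required decomposition.

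There is no serious obstacle: everything reduces to a routine Taylor expansion of the metric components and their inverse. The only points requiring care are the uniform invertibility of the matrix $(\Phi^* g_\circ)_{AB}$ on the whole tubular neighborhood, which follows from compactness of $\Lambda$ together with the vanishing at $z=0$ of the perturbation in Lemma~\ref{appendilemma1}, and the observation that one does not try to factor a $z_i$ out of the first order piece $D^{(1)}$, since (unlike the second-order correction) the change of first-order coefficients between $\Delta_{\Phi^* g_\circ}$ and $\Delta_{\bar g}$ need not vanish at $z = 0$.
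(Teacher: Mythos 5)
Your proposal is correct and follows exactly the route the paper intends: the paper's own proof of this lemma is a single line, stating that it follows from Lemma~\ref{appendilemma1} together with the local-coordinate formula for the Laplace--Beltrami operator. Your write-up simply fills in the details of that argument — pulling back $\Delta$ to $\Delta_{\Phi^* g_\circ}$, inverting the perturbed metric, and observing that the second-order coefficient differences vanish at $z=0$ while the first-order ones need not — and each of those points is accurate.
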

\begin{proof}
The proof follows  from the result of Lemma \ref{appendilemma1} and the expression of the Laplacian in local coordinates.
\end{proof}

We can define, in a fixed tubular neighborhood of $\Lambda$,  the function $a$ by
\begin{equation}
{\rm dvol}_{g_\circ} = a \, {\rm dvol}_{\bar g}.
\label{eq:defa}
\end{equation}
Observe that $a$ is smooth and $a\equiv 1$ along $\Lambda$. Moreover, it follows from Lemma~\ref{appendilemma1}, that there exists a constant $C >0$ such that
\[
\left| a -1 \right| \leq C \, |z| ,
\]
in a tubular neighborhood of $\Lambda$.

\section{Construction of a sequence of approximate solutions}\label{sectgaggiusta}

We assume that $p\in(1, + \infty)$ if $n= 1, 2$ or $p\in (1, \frac{n+2}{n-2})$ if $n\geq 3$.  As in the introduction, we let  $U$ to be the unique positive radial solution of (\ref{problemradialguida}) and we set
\begin{equation}
\label{LINLALALLA}
L : = - \left(  \Delta + p  U^{p-1}\right) ,
\end{equation}
to be the linearized operator about $U$.  The spectrum of $L$ will be denoted by
\begin{equation}
\mu_0 < \mu_1 \leq \mu_2 \leq \mu_3 \leq \ldots
\end{equation}
and the corresponding eigenfunctions, which will be denoted by $\phi_j$, are normalized to have norm $1$ in $L^2(B_1^n)$. It is known (see Theorem 4.1 and Theorem 4.2 in  \cite{DGP}) that
\[
\mu_0 < 0 < \mu_1.
\]

We use the Fermi coordinates introduced in the previous section. According to \eqref{defspread1}, the function $\bar u_\epsilon$ only depends on $|z|$ which is nothing but the distance function from a point to $\Lambda$. We can write
\[
\Delta  \bar u_\epsilon + \bar u_\epsilon^p =  (\Delta_{g_z} \bar u_\epsilon + \bar u_\epsilon^p) + (\Delta_{\mathring g} + D )   \bar u_\epsilon .
\]
Since $\Delta_{\mathring g}  \bar u_\epsilon=0$ and since $\Delta_{g_z}  \bar u_\epsilon + \bar u_\epsilon^p=0$, we conclude that
\begin{equation}\label{starfidi}
\Delta  \bar u_\epsilon +\bar u_\epsilon^p =D    \bar u_\epsilon .
\end{equation}

As mentioned before, the idea is first to implement an iteration scheme to perturb $\bar u_\epsilon$ into a sequence of approximate solutions which are closer to being a genuine solution of our problem.  To do so, we write $u = \bar u_\epsilon +v$ and, making use of the result of Lemma~\ref{le:4.1} and \eqref{starfidi}, we rewrite the equation in (\ref{mainproblemepsilon}) as
\begin{equation}
- (\Delta_{g_z} + p   \bar u_\epsilon^{p-1} )   v =  E_\epsilon + K_\epsilon (v) + (\Delta_{\mathring g} + D)   v ,
\label{eq:4.11}
\end{equation}
where by definition
\[
E_\epsilon : =  \Delta \bar u_\epsilon + \bar u_\epsilon^p ,
\]
and
\[
K_\epsilon (v) : =   |\bar u_\epsilon + v|^{p} - \bar u_\epsilon^p -  p  \bar u_\epsilon^{p-1}   v .
\]

The iteration scheme we use is the following~: we set $v_{\epsilon,0} \equiv 0$ and, for all $i \geq 0$, we define inductively $v_{\epsilon,i+1}$ to be the solution of
\begin{equation}
\left\{
\begin{array}{rllll}
- (\Delta_{g_z} + p \bar u_\epsilon^{p-1} )  v_{\epsilon,i+1} & = &  E_\epsilon +  K_\epsilon (v_{\epsilon,i}) +  (\Delta_{\mathring g} + D)   v_{\epsilon,i}   & \mbox{in} \quad B_\epsilon( \Lambda) \\[3mm]
v_{\epsilon, i+1} & = &  0  & \mbox{on} \quad T_\epsilon( \Lambda) .
\end{array}
\right.
\label{eq:4.12}
\end{equation}
Observe that the functions are defined in $B_\epsilon (\Lambda)$ but the operator on the left hand side only depends on the variable normal to $\Lambda$, namely $\partial_{z_j}$. So, when we solve this equation, we only solve the equation in $B_\epsilon^n$, the ball of radius $\epsilon$ in $\mathbb{R}^n$ centered at the origin, and we consider the variable on $\Lambda$ as parameters. At each iteration, we loose two degrees of regularity in the variables belonging to $\Lambda$ but this is not a problem if we assume that  $\Lambda$ is differentiable enough, in fact this is where we need $\Lambda$ to be smooth if we want the sequence to be defined for all $i$ and $\Lambda$ should be regular enough if we just need a finite number of iteration. For sake of simplicity, we state and prove all results when $\Lambda$ is a smooth submanifold of $\mathbb R^m$, leaving the statement for the case where $\Lambda$ has finite smoothness to the reader.

To invert the left hand side of (\ref{eq:4.11}), we simply use  a scaling argument and the fact that, according to the result in \cite{DGP}, $0$ is not in the spectrum of the operator $L$ and hence this operator is invertible. In particular, if one wants to solve
\[
\left\{
\begin{array}{rllll}
- \left( \Delta  + p   \bar u_\epsilon^{p-1} \right) v & = & f \qquad & \mbox{in} \qquad B^n_\epsilon\\[3mm]
v & = & 0 \qquad & \mbox{on} \qquad \partial B_\epsilon^n ,
\end{array}
\right.
\]
one just considers $\tilde v (x) := v(\epsilon x)$ and $\tilde f (x) : = f(\epsilon x)$ which solve
\[
\left\{
\begin{array}{rllll}
- \left( \Delta + p  U^{p-1} \right)   \tilde v  & = &  \epsilon^2  \tilde f  \qquad & \mbox{in} \qquad B^n_1 \\[3mm]
\tilde v & = & 0 \qquad &  \mbox{on} \qquad \partial B_1^n .
\end{array}
\right.
\]
Standard elliptic estimates  for $\tilde v$ are available and the corresponding scaled estimates for the function $v$ follow at once.  Observe the gain of two powers of $\epsilon$ due to the presence of $\epsilon^2$ on the right hand side of the last equation.

We define
\begin{equation}
u_{\epsilon, i} : =  \bar u_\epsilon + v_{\epsilon, i}.
\label{eq:uepsiloni}
\end{equation}
We claim the following~:
\begin{proposition}
There exist constants $C>0$ and $\epsilon_0 >0$ such that, for all  $\epsilon \in (0,\epsilon_0)$ and for all $i \in \mathbb N$
\begin{equation}
\label{eq:4.11bis}
\| \Delta  u_{\epsilon,i} + u_{\epsilon,i}^p \|_{L^\infty (B_\epsilon (\Lambda))} \leq C   \epsilon^{i -\frac{p+1}{p-1}},
\end{equation}
and
\begin{equation}
\left\| \frac{v_{\epsilon,i}}{\bar u_\epsilon} \right\|_{L^\infty (B_\epsilon (\Lambda))}  + \epsilon   \left\| \frac{\partial_\epsilon v_{\epsilon,i}}{\bar u_\epsilon}  \right\|_{L^\infty (B_\epsilon (\Lambda))}  \leq C \epsilon .
\label{eq:4.12}
\end{equation}
\label{pr:4.1}
\end{proposition}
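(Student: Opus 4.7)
The proof proceeds by induction on $i$. In the base case $i=0$: $v_{\epsilon,0}\equiv 0$ makes \eqref{eq:4.12} trivial, and \eqref{starfidi} gives $\Delta\bar u_\epsilon+\bar u_\epsilon^p=D\bar u_\epsilon$; the decomposition of $D$ from Lemma~\ref{le:4.1}, the scaling $\partial_z^k\bar u_\epsilon=O(\epsilon^{-k-2/(p-1)})$, and $|z|\le\epsilon$ yield $\|D\bar u_\epsilon\|_{L^\infty}\le C\epsilon^{-(p+1)/(p-1)}$, which is \eqref{eq:4.11bis} at level $0$.

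The key to the inductive step is the following pair of identities for the increment $w_i:=v_{\epsilon,i+1}-v_{\epsilon,i}$, both obtained by direct algebraic manipulation of the iterative scheme:
\begin{equation*}
\Delta u_{\epsilon,i+1}+u_{\epsilon,i+1}^p=\bigl[K_\epsilon(v_{\epsilon,i+1})-K_\epsilon(v_{\epsilon,i})\bigr]+(\Delta_{\mathring g}+D)\,w_i ,
\end{equation*}
\begin{equation*}
-(\Delta_{g_z}+p\,\bar u_\epsilon^{p-1})\,w_i \;=\; \Delta u_{\epsilon,i}+u_{\epsilon,i}^p\qquad\mbox{in }B_\epsilon(\Lambda).
\end{equation*}
The second identity is the cornerstone: the correction $w_i$ solves a fibered linear problem whose right-hand side is exactly the current residual. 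Via the scaling $\tilde w_i(x):=w_i(\epsilon x)$, the operator becomes $L=-(\Delta+pU^{p-1})$ on $B_1^n$, which is invertible since $0\notin\mathrm{Spec}(L)$; Schauder estimates then give $\|\partial_z^\beta w_i\|_{L^\infty}\le C\epsilon^{2-|\beta|}\|\Delta u_{\epsilon,i}+u_{\epsilon,i}^p\|_{L^\infty}$ for $|\beta|\le 2$. Since $\Delta_{g_z}+p\bar u_\epsilon^{p-1}$ contains no tangential derivatives, differentiating the equation for $w_i$ in the $y$-variables (permitted because $\Lambda$ is smooth) yields the analogous bound with $\partial_z^\beta$ replaced by $\partial_y^\alpha\partial_z^\beta$.

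Using the boundary decay $U(|\tilde z|)\gtrsim 1-|\tilde z|$ to compare $w_i/\bar u_\epsilon$ with its scaled version, the inductive residual bound gives $\|w_i/\bar u_\epsilon\|_{L^\infty}\le C\epsilon^{i+1}$; summing the telescope $v_{\epsilon,i+1}=\sum_{j\le i}w_j$ yields the first term of \eqref{eq:4.12} at level $i+1$. The bound \eqref{eq:4.11bis} at level $i+1$ then follows by inserting into the first identity the derivative bounds for $w_i$, the quadratic estimate $|K_\epsilon(v)-K_\epsilon(v')|\le C\bar u_\epsilon^{p-2}(|v|+|v'|)|v-v'|$, and $|Dw_i|\le\epsilon\|\partial^2 w_i\|_{L^\infty}+C\|\partial w_i\|_{L^\infty}$ (the factor $\epsilon$ coming from $|z|\le\epsilon$ in front of the second-order part of $D$); each contribution is of size $C\epsilon^{i+1-(p+1)/(p-1)}$. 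The $\partial_\epsilon$-bound is handled identically after passing to the scaled variable $\tilde z=z/\epsilon$, in which both $B_1^n$ and $L$ become $\epsilon$-independent, so that $\partial_\epsilon$ commutes with the principal part and the same argument applies to the $\epsilon$-differentiated equation. The main obstacle is regularity: each step costs two tangential derivatives of the coefficients of $D$, hence of the embedding of $\Lambda$, which is precisely why smoothness of $\Lambda$ is needed to run the scheme for all $i\in\mathbb N$.
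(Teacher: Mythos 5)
Your proposal is correct and follows essentially the same route as the paper: inductive scheme, scaling the normal operator to $L=-(\Delta+pU^{p-1})$ on $B_1^n$, commuting tangential derivatives through the $z$-fibered operator, Hopf-type boundary decay to control $v_{\epsilon,i}/\bar u_\epsilon$, and Taylor expansion of $K_\epsilon$. Your pair of identities (the increment $w_i$ solving the fibered problem with RHS the current residual, and the new residual being exactly $K_\epsilon(v_{\epsilon,i+1})-K_\epsilon(v_{\epsilon,i})+(\Delta_{\mathring g}+D)w_i$) is a clean algebraic repackaging of the paper's difference equation for $v_{\epsilon,i+1}-v_{\epsilon,i}$, and the exponent bookkeeping checks out; the only imprecision is writing $L^\infty$ bounds on $\partial_z^2 w_i$ from $L^\infty$ data, whereas (as you implicitly acknowledge by invoking Schauder) the induction must in fact track Hölder norms $\mathcal C^{0,\alpha}_\epsilon/\mathcal C^{2,\alpha}_\epsilon$ as the paper does.
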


\begin{proof}  The proof is decomposed into a few steps each of which takes advantage of a particular property of the problem we are studying. To begin, we need to introduce the norms
\[
\begin{array}{rllll}
\| u\|_{\mathcal C^{0, \alpha}_\epsilon (B_\epsilon (\Lambda))} & : = &  \| u \|_{L^\infty (B_\epsilon (\Lambda))} + \displaystyle \sup_{(y,z),(y,z') \in \Omega_\epsilon (\Lambda)} \epsilon^{\alpha}   \frac{| u \circ F_1(y, z) - u \circ F_1 ( y , z')|}{|z-z'|^\alpha} ,
\end{array}
\]
and
\[
\begin{array}{rllll}
\| u\|_{\mathcal C^{2, \alpha}_\epsilon (B_\epsilon (\Lambda))} & : = &  \| u \|_{L^\infty (B_\epsilon (\Lambda))} +  \epsilon  \| \nabla_{g_z}  u \|_{L^\infty (B_\epsilon (\Lambda))} + \epsilon^2  \| \nabla_{g_z}^2 u \|_{L^\infty (B_\epsilon (\Lambda))}  \\[3mm]
& + & \displaystyle \sup_{(y,z),(y,z') \in \Omega_\epsilon (\Lambda)} \epsilon^{2+\alpha}   \frac{| \nabla_{g_z}^2 u \circ F_1(y, z) - \nabla_{g_z}^2 u \circ F_1 ( y , z')|}{|z-z'|^\alpha} .
\end{array}
\]

\noindent
{\bf Step 1.} According to \eqref{starfidi}, we have in local coordinates
\[
\Delta \bar u_\epsilon +\bar u_\epsilon^p = \sum_{i=1}^n z_i D^{(2)}_i \bar u_\epsilon + D^{(1)} \bar u_\epsilon  .
\]
As remarked earlier, the function $\bar u_\epsilon$ only depends on $|z|$ and hence we have
\[
 \| \bar u_\epsilon \|_{\mathcal C^{2, \alpha}_\epsilon (B_\epsilon (\Lambda))}  \leq C   \epsilon^{-\frac{2}{p-1}} .
\]
This follows from the the fact that $U \in \mathcal C^{2,\alpha}(B_1^n)$.

Taking advantage of the fact that the coefficients in $D^{(2)}_i$ and $D^{(1)}$ are smooth functions whose partial derivatives are bounded independently of $\epsilon$, we conclude that for all $\ell \in \mathbb N$, we have
\[
\epsilon \, \| \nabla_{\mathring g}^{\ell}   D^{(2)}_i \bar u_\epsilon \|_{\mathcal C^{0, \alpha}_\epsilon (B_\epsilon (\Lambda))}Ê+   \| \nabla_{\mathring g}^{\ell}  D^{(1)} \bar  u_\epsilon \|_{\mathcal C^{0, \alpha}_\epsilon (B_\epsilon (\Lambda))}Ê\leq C_{\ell}   \epsilon^{- \frac{p+1}{p-1}}  .
\]
for some constant $C_{\ell} >0$ which does not depend on $\epsilon \in (0,1)$. This implies that, for all $\ell \in \mathbb N$, there exists $C_{\ell} >0$ such that, for all $\epsilon \in (0,1)$
\[
\| \nabla_{\mathring g}^{ \ell}  \left( \Delta \bar u_\epsilon + \bar u_\epsilon^p\right) \|_{\mathcal C^{0,\alpha}_{\epsilon} (B_\epsilon (\Lambda))}\leq C_\ell   \epsilon^{-\frac{p+1}{p-1}}.
\]
Taking $\ell=0$, this already proves (\ref{eq:4.11bis}) when $i =0$.

\noindent
{\bf Step 2.}  To prove the first half of (\ref{eq:4.12}) when $i=1$, we use  the fact that
\[
- (\Delta_{g_z} + p   \bar u_\epsilon^{p-1}) v_{\epsilon, 1} =  \Delta \bar u_\epsilon + \bar u_\epsilon^p.
\]
Using the inverse of the operator $- (\Delta_{g_z} + p   \bar u_\epsilon^{p-1})$  and considering the variables on $\Lambda$ as parameters we get from standard elliptic estimates  that there exists $C_{\ell} >0$ such that, for all $\epsilon \in (0,1)$
\[
\|  v_{\epsilon,1} \|_{C^{2, \alpha}_\epsilon (B_\epsilon (\Lambda))} \leq C \epsilon^{1 - \frac{2}{p-1}} .
\]
We have obtained the first half of (\ref{eq:4.12}) when $i=1$.

\noindent {\bf Step 3.} We now derive some estimates for the partial derivatives of $v_{\epsilon,1}$ in the direction parallel to $\Lambda$. More precisely, we choose local coordinates $t_1, \ldots , t_k$ on $\Lambda$ and a cutoff function $\chi$ with  compact support where these coordinates are well defined. Observe that the operator
\[
\Gamma : f \mapsto \chi   \partial_{t_{i_1}} \ldots \partial_{t_{i_\ell}} f ,
\]
commutes with $-( \Delta_{g_z} + p   \bar u_\epsilon^{p-1})$ and hence
\[
- (\Delta_{g_z} + p   \bar u_\epsilon^{p-1}) Ê\Gamma v_{\epsilon, 1} =  \Gamma (\Delta \bar u_\epsilon + \bar u_\epsilon^p) .
\]
Moreover, since $v_{\epsilon, 1}$ vanishes on $T_\epsilon (\Lambda)$, so does $ \Gamma v_{\epsilon, 1}$. Since we have already estimated the right hand side of this equation, we can use the inverse of the operator $- (\Delta_{g_z} + p   \bar u_\epsilon^{p-1})$ and we get, for all $\ell \in \mathbb N$,
\begin{equation}
\label{estl}
\| \nabla_{\mathring g}^{ \ell}   v_{\epsilon, 1}Ê\|_{C^{2, \alpha}_\epsilon } \leq C_\ell \epsilon^{1 - \frac{2}{p-1}},
\end{equation}
where, as usual, $C_{\ell} >0$  does not depend on $\epsilon \in (0,1)$.


\noindent
{\bf Step 4.} To proceed, we  argue by induction. Taking the difference between the equation satisfied by $v_{\epsilon, i+1}$ and the equation satisfied by $v_{\epsilon, i}$ we get
\[
-(\Delta_{g_z} + p   \bar u_\epsilon)   (v_{\epsilon, i+1} - v_{\epsilon, i}) = K_\epsilon (v_{\epsilon, i}) - K_\epsilon (v_{\epsilon, i-1}) + (\Delta_{\mathring g} + D) (v_{\epsilon, i}- v_{\epsilon, i-1}) ,
\]
and one proves by induction that
\[
\|  \nabla_{\mathring g}^{ \ell} (v_{\epsilon,i+1} - v_{\epsilon, i} ) \|_{C^{2, \alpha}_\epsilon  (B_\epsilon (\Lambda))}  \leq C_\ell  \epsilon^{i +1 - \frac{2}{p-1}} ,
\]
where $C_{\ell} >0$  does not depend on $\epsilon \in (0,1)$. The proof uses the arguments already employed in Step 2 and Step 3. There is though one additional argument which is needed to estimate the nonlinear term $K_\epsilon (v)$. The key observation is that  $v_{\epsilon, i}$ vanishes on $\partial B_\epsilon (\Lambda)$ and if
\[
\| \nabla_{g_z}  v_{\epsilon, i}\|_{L^\infty (B_\epsilon (\Lambda))} \leq C   \epsilon^{- \frac{2}{p-1}},
\]
then, we have the pointwise estimate
\begin{equation}\label{gygysrrrstdgfnnnvbc}
|v_{\epsilon, i}| \leq \frac{1}{2}   \bar u_{\epsilon} ,
\end{equation}
provided $\epsilon$ is chosen close enough to $0$, since by Hopf boundary Lemma $U$ has non-zero normal derivative on $T_\epsilon (\Lambda)$. Hence, we are entitled to write
\[
K_\epsilon (v_{\epsilon, i}) = \bar u_\epsilon^{p}   \left( (1+ w_{\epsilon,i})^p -1-p   w_{\epsilon, i}\right),
\]
where
\[
w_{\epsilon, i} : =  \frac{v_{\epsilon, i}}{\bar u_{\epsilon}} ,
\]
takes values into $[-1/2, 1/2]$. In particular, we can use standard Taylor's expansion to evaluate the nonlinear term $K_\epsilon$. Details are left to the reader.
\end{proof}

Observe that \eqref{gygysrrrstdgfnnnvbc} also implies that, if $i\in \mathbb N$ is fixed, $u_{\epsilon, i} >0$ in $B_\epsilon (\Lambda)$ provided $\epsilon$ is chosen small enough.

\section{Analysis of the linearized operator about $u_{\epsilon, i}$}

We keep the notations of the previous section. In particular, $u_{\epsilon, i}$ denotes one of the approximate solutions which have been defined in \eqref{eq:uepsiloni}. In this section we are interested in the mapping properties of the  linearized operator about $u_{\epsilon, i}$, namely
\[
L_{\epsilon, i} : =  -  \left( \Delta + p   u_{\epsilon, i}^{p-1}\right) .
\]
We will exploit the fact that, in some sense to be made precise, this operator is close to the operator
\[
\mathcal L_\epsilon : =  -  \left( \Delta_{\bar g} + p  \bar u_\epsilon^{p-1} \right),
\]
whose eigenvalues are explicitly given by
\begin{equation}\label{defautoval}
\frac{\mu_i}{\epsilon^2} + \lambda_j ,
\end{equation}
where we recall that $(\mu_j)_{j \geq 0}$ are the eigenvalues of $L$ (defined in \eqref{LINLALALLA}) and where
\[
\lambda_0 =0 < \lambda_1 \leq \lambda_2\leq \ldots
\]
are the eigenvalues of $-\Delta_{\mathring g}$ on $\Lambda$.

There are some remarks which are straightforward but nevertheless very important. All rely on the fact that, as already mentioned,  $\mu_0 <0 <\mu_1$.

\begin{enumerate}

\item[(i)] The Morse index of  $\mathcal L_\epsilon$, which  is defined  to be the maximal dimension of  the subspaces of $H^1_0 (B_\epsilon (\Lambda))$ over which the quadratic form
\begin{equation}
\label{quad}
\mathcal Q_\epsilon  (v) : =  \int_{B_\epsilon (\Lambda)} \left( |\nabla_{\bar g} v|^2 - p    \bar u_\epsilon^{p-1}  v^2\right)   {\rm dvol}_{\bar g} ,
\end{equation}
is definite negative, is a decreasing function of $\epsilon$.  Observe that $\mathcal Q_\epsilon$ is defined using the volume form associated to the metric $\bar g$ on $N\Lambda$ and
\[
|\nabla_{\bar g} v|^2 =  |\nabla_{\mathring g} v|^2 +  |\nabla_{g_z} v|^2.
\]
By Weyl's formula (see for example \cite{He}) it is known that the number of eigenvalues of $-\Delta_{\mathring g}$ (counted with multiplicity), which are less than $\lambda >0$ is asymptotic  to $\lambda^{k/2}$ as $\lambda$ tends to infinity. Therefore, taking into account \eqref{defautoval} (see also (ii) here below),
 we get an estimate of the Morse index of $\mathcal L_\epsilon$, namely
\[
\mbox{\rm Index }    \mathcal L_{\epsilon} \sim  \epsilon^{-k}  .
\]

\item[(ii)] Observe that
\[
\frac{\mu_i}{\epsilon^2} + \lambda_j \geq \frac{\mu_1}{\epsilon^2} >0 ,
\]
for all $i \geq 1$ and all $j\geq 0$. In particular, the eigenfunctions associated to negative eigenvalues of $\mathcal L_\epsilon$ are of the form
\[
(y,z) \mapsto \phi_0 (z/ \epsilon)   \psi (y) ,
\]
where $\psi$ is an eigenfunction of $-\Delta_{\mathring g}$ and where we recall that $\phi_0$ is the eigenfunction of $L$ associated to $\mu_0$.

\item[(iii)]  The eigenvalues of $\mathcal L_\epsilon$ are monotone functions of $\epsilon$ and in fact
\[
\partial_\epsilon   \left( \frac{\mu_0}{ \epsilon^2} + \lambda_j \right) =  - 2   \frac{\mu_0}{\epsilon^3}.
\]
In particular, the Morse index of  $\mathcal L_\epsilon$ is a decreasing function of $\epsilon$.

\item[(iv)]  The spectrum of $\mathcal L_\epsilon$ contains $0$ if and only if
\[
\epsilon = \sqrt \frac{-\mu_0}{\lambda_j},
\]
and for this special values of $\epsilon$, the operator $\mathcal L_\epsilon$ (under $0$ Dirichlet boundary conditions) is not invertible.
\end{enumerate}

Having these remarks in mind, we now explain the argument we would use if we were to work with the operator $\mathcal L_\epsilon$ instead of $L_{\epsilon, i}$.

We define
\[
\mathcal Z_\epsilon : =  \{\epsilon >0   \, : \,  \exists j \in \mathbb N ,   \quad   \mu_0+ \epsilon^2   \lambda_j = 0 \} ,
\]
which corresponds to the set of $\epsilon$'s for which the operator
\[
\begin{array}{cccc}
H^1_0(B_\epsilon (\Lambda)) \cap H^2 (B_\epsilon (\Lambda)) & \to &L^2 (B_\epsilon (\Lambda)), \\[3mm]
w & \mapsto & \mathcal L_\epsilon  w,
\end{array}
\]
is not invertible. Now, if $\epsilon \notin \mathcal Z_\epsilon $,  we can estimate the norm  of the inverse of $\mathcal L_\epsilon$ by a constant times $1/\delta_\epsilon$ where $\delta_\epsilon$ is the distance from $0$ to the spectrum of $\mathcal L_\epsilon$, namely
\[
\delta_\epsilon : =  \min \left\{ \left| \frac{\mu_0}{ \epsilon^2} + \lambda_j \right|  \,  : \,  j \in \mathbb N \right\}.
\]

We fix $N \geq \max(2,k)$ and we define, for all $\epsilon$ such that $0<\epsilon <1$, the set
\[
S_{\epsilon, N} : = \{ \bar \epsilon \in (\epsilon, 2\epsilon)  \,  : \,   (\bar \epsilon - \epsilon^N , \bar\epsilon +\epsilon^N) \cap \mathcal Z_\epsilon = \varnothing \}.
\]
Property (i), which makes use of Weyl's asymptotic formula, implies in particular that    $\epsilon- \text{meas}(S_{\epsilon, N})$ cannot be larger than  a constant times $\epsilon^{N-k}$ and, for any $\bar \epsilon \in S_{\epsilon, N}$ we know from property (iii) (or from direct estimate) that the norm of the inverse of $\mathcal L_{\bar \epsilon}$ (defined as above) is bounded by a constant times $\epsilon^{3-N}$.

Therefore, if
\[
S_N : =  \bigcup_{\epsilon \in (0,1)} S_{\epsilon, N} ,
\]
then, for all $\epsilon \in S_N$, the norm of the inverse of $\mathcal L_\epsilon$ from $L^2 (B_\epsilon (\Lambda))$ into $L^2 (B_\epsilon (\Lambda))$ is controlled by a constant times $\epsilon^{3-N}$. Moreover, if $N-k \geq 2$ we have
\[
\lim_{\epsilon \to 0} \frac{1}{\epsilon^\alpha} \big( \epsilon - \mbox{meas }\big( S_N\cap (0,\epsilon)\big)\big) =0 ,
\]
provided $\alpha \in (1, N-k)$.

This is the argument we will try to adapt to the operator $L_{\epsilon, i}$. The main difficulty is that we will not be able to use separation of variables anymore, instead we will use the fact that the operators $L_{\epsilon, i}$ and $\mathcal L_\epsilon$ are close.

\subsection{Estimating the Morse index of $L_{\epsilon, i}$}

In this section, given $i \geq 0$, we recover partially Property (i) for the operator $L_{\epsilon, i}$.  We define the quadratic form associated to $L_{\epsilon, i}$ by
\begin{equation}
\mathcal Q_{\epsilon, i} (v) := \int_{B_\epsilon (\Lambda)} \left( | \nabla_{g_\circ} v|^2 -  p  u_{\epsilon,i}^{p-1}   v^2\right)   {\rm dvol}_{g_\circ} .
\end{equation}
Observe that the volume form is the one associated to the Euclidean metric $g_\circ$. Similarly,  the norm of the gradient of the function $v$ is computed using the Euclidean metric $g_\circ$. Comparing $\mathcal Q_{\epsilon, i}$ to the quadratic form $\mathcal Q_\epsilon$ which was defined in (\ref{quad}), we prove the
\begin{lemma}
\label{ND1}
Assume that $i\in \mathbb N$ is fixed. Then, there exists a constant $C>0$ such that
\[
\mbox{\rm Index }    L_{\epsilon,i} \leq C   \epsilon^{-k} ,
\]
for all $\epsilon >0$ close enough to $0$.
\end{lemma}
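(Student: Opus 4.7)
The plan is to compare the quadratic form $\mathcal Q_{\epsilon,i}$ with the simpler form $\mathcal Q_\epsilon$ introduced in property~(i), whose associated operator $\mathcal L_\epsilon$ has the explicit spectrum $\{\mu_j\epsilon^{-2}+\lambda_\ell\}$ from \eqref{defautoval}. A direct count shows that the number of eigenvalues of $\mathcal L_\epsilon$ lying below the threshold $\tau_\epsilon:=\mu_1/(2\epsilon^2)$ is of order $\epsilon^{-k}$: the inequality $\mu_j\epsilon^{-2}+\lambda_\ell<\tau_\epsilon$ forces $j=0$ (since $\mu_j\geq \mu_1$ for $j\geq 1$) and therefore $\lambda_\ell<(\mu_1/2-\mu_0)\epsilon^{-2}$, so Weyl's asymptotics on $\Lambda$ give at most $C\epsilon^{-k}$ admissible indices~$\ell$. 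I would then transfer this bound from $\mathcal L_\epsilon$ to $L_{\epsilon,i}$ by a min--max argument.

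Let $V_\epsilon$ denote the span, pulled back to $B_\epsilon(\Lambda)$ via the Fermi parametrization, of those $\mathcal L_\epsilon$-eigenfunctions whose eigenvalue is strictly less than $\tau_\epsilon$; the above count gives $\dim V_\epsilon \leq C\epsilon^{-k}$. Writing $H^1_0(B_\epsilon(\Lambda))=V_\epsilon\oplus W_\epsilon$ via the spectral decomposition, the closed complement $W_\epsilon$ satisfies
\[
\mathcal Q_\epsilon(v)\geq \tau_\epsilon\,\|v\|_{L^2(\bar g)}^2,\qquad v\in W_\epsilon.
\]
The goal reduces to showing $\mathcal Q_{\epsilon,i}(v)>0$ on $W_\epsilon\setminus\{0\}$: any subspace on which $\mathcal Q_{\epsilon,i}$ is negative definite must then intersect $W_\epsilon$ trivially, forcing its dimension to be at most $\dim V_\epsilon\leq C\epsilon^{-k}$.

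The comparison of the two quadratic forms rests on three uniform perturbations of size $O(\epsilon)$ on $B_\epsilon(\Lambda)$: (a) by \eqref{eq:defa} together with Lemma~\ref{appendilemma1} the volume factor $a$ satisfies $|a-1|\leq C\epsilon$; (b) the expansion \eqref{appendilemma1bis} yields $g_\circ^{ij}=\bar g^{ij}+O(|z|)$, whence $\bigl||\nabla_{g_\circ}v|^2-|\nabla_{\bar g}v|^2\bigr|\leq C\epsilon|\nabla_{\bar g}v|^2$; (c) the estimate $\|v_{\epsilon,i}/\bar u_\epsilon\|_\infty\leq C\epsilon$ from Proposition~\ref{pr:4.1} combined with a Taylor expansion, exactly as in Step~4 of its proof, gives $u_{\epsilon,i}^{p-1}=\bar u_\epsilon^{p-1}(1+O(\epsilon))$. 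Assembling these three estimates delivers the key bound
\[
\bigl|\mathcal Q_{\epsilon,i}(v)-\mathcal Q_\epsilon(v)\bigr|\leq C\epsilon\int_{B_\epsilon(\Lambda)}\bigl(|\nabla_{\bar g}v|^2+\bar u_\epsilon^{p-1}v^2\bigr)\,\mathrm{dvol}_{\bar g}.
\]

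To finish, I would absorb the right-hand side into $\mathcal Q_\epsilon(v)$ on $W_\epsilon$. Writing $|\nabla_{\bar g}v|^2+\bar u_\epsilon^{p-1}v^2=\bigl(|\nabla_{\bar g}v|^2-p\bar u_\epsilon^{p-1}v^2\bigr)+(p+1)\bar u_\epsilon^{p-1}v^2$, the first parenthesis integrates to $\mathcal Q_\epsilon(v)$, while the pointwise bound $\bar u_\epsilon^{p-1}\leq C\epsilon^{-2}$ (immediate from \eqref{defspread1}) together with the coercivity $\tau_\epsilon\|v\|_{L^2(\bar g)}^2\leq \mathcal Q_\epsilon(v)$ on $W_\epsilon$ yields $\int\bar u_\epsilon^{p-1}v^2\leq C'\mathcal Q_\epsilon(v)$. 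Consequently $\mathcal Q_{\epsilon,i}(v)\geq (1-C''\epsilon)\mathcal Q_\epsilon(v)>0$ for $\epsilon$ small and $v\in W_\epsilon\setminus\{0\}$, completing the argument. The only delicate point is the uniform validity of (a)--(c) on the shrinking domain; each follows from Lemma~\ref{appendilemma1} and Proposition~\ref{pr:4.1} but must be verified carefully so that the constants involved do not depend on $\epsilon$.
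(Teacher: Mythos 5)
Your proof is correct and follows the paper's own strategy closely: the same three $O(\epsilon)$ comparison estimates (volume form, gradient, potential) reduce the problem to the model operator $\mathcal L_\epsilon$, whose explicit spectrum $\mu_j\epsilon^{-2}+\lambda_\ell$ combined with Weyl's asymptotics yields the $\epsilon^{-k}$ bound. The only (cosmetic) difference is in the final dimension-counting step: the paper shows that any $L^2$-normalized $v$ with $\mathcal Q_{\epsilon,i}(v)\le 0$ satisfies $\mathcal Q_\epsilon(v)\le C/\epsilon$ and invokes min--max directly, whereas you phrase the dual statement -- positivity of $\mathcal Q_{\epsilon,i}$ on the high-eigenvalue complement $W_\epsilon$ -- which is an equivalent reformulation of the same argument.
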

\begin{proof}
Let $v \in H^1_0 (B_\epsilon (\Lambda))$ which is normalized so that $\|v \|_{L^2(B_\epsilon (\Lambda))} =1$ and which satisfies $\mathcal Q_{\epsilon ,i} (v) \leq 0$. We want to estimate $\mathcal Q_\epsilon (v)$. Observe that the difference between $\mathcal Q_\epsilon$ and $\mathcal Q_{\epsilon, i}$ can be attributed to three different phenomena. First the difference between the square of the norm of the gradient of $v$ when the Euclidean metric or the product metric $\bar g$ are used, second the difference between the potentials $\bar u_\epsilon$ and $u_{\epsilon, i}$ and finally the difference between the volume forms   when the Euclidean metric or the product metric  $\bar g$ are used.

Using the result of Lemma~\ref{appendilemma1}, it is easy to check that
\[
\left| | \nabla_{g_\circ} v|^2 - | \nabla_{\bar g} v|^2\right| \leq C   \epsilon   | \nabla_{g_\circ} v|^2.
\]
Similarly, it follows from Proposition~\ref{pr:4.1}, that
\[
\left|  \bar u_\epsilon^{p-1} - u_{\epsilon, i}^{p-1} \right| \leq C   \epsilon   u_{\epsilon, i}^{p-1}.
\]
Finally, it follows once more from Lemma~\ref{appendilemma1} that the difference between the volume forms can be estimated in local coordinates by
\[
\left|  \sqrt{{\rm det}    \bar g} - \sqrt{{\rm det}    g_\circ } \right| \leq C    \epsilon \sqrt{{\rm det}   \bar g  }.
\]
Since $\mathcal Q_{\epsilon, i} (v) \leq 0$, we find that
\[
\begin{array}{rlll}
\mathcal Q_\epsilon (v) & = &  \mathcal Q_\epsilon (v)  - \mathcal Q_{\epsilon, i} (v) + \mathcal Q_{\epsilon, i} (v) \\[3mm]
&  \leq &  \mathcal Q_\epsilon (v)  - \mathcal Q_{\epsilon, i} (v) \\[3mm]
& \leq & \displaystyle C   \epsilon   \int_{B_\epsilon (\Lambda)} \left( | \nabla_{g_\circ} v|^2 +  u_{\epsilon,i}^{p-1} v^2\right) {\rm dvol}_{g_\circ} .
\end{array}
\]
Moreover,  $\mathcal Q_{\epsilon, i} (v) \leq 0$ also implies that
\[
 \int_{B_\epsilon (\Lambda)} | \nabla_{g_\circ} v|^2   {\rm dvol}_{g_\circ}  \leq  \int_{B_\epsilon (\Lambda)}  p  u_{\epsilon,i}^{p-1}   v^2   {\rm dvol}_{g_\circ} \leq  C  \epsilon^{-2}    \int_{B_\epsilon (\Lambda)}  v^2 {\rm dvol}_{\bar g} .
\]
Therefore, we have
\[
\mathcal Q_\epsilon (v)   \leq \frac{C}{\epsilon}   \int_{B_\epsilon (\Lambda)} v^2   {\rm dvol}_{\bar g}  = \frac{C}{\epsilon}.
\]
Using these, we see that the index of $L_{\epsilon, i}$ is bounded by the dimension of the space spanned by the eigenfunctions of $\mathcal L_{\epsilon}$ associated to eigenvalues less than or equal to $C/\epsilon$. Using Weyl's asymptotic formula and the explicit expression for the eigenvalues of $\mathcal L_\epsilon$, we conclude that the index of $L_{\epsilon, i}$ is bounded by a constant times $\epsilon^{-k}$ and this completes the proof of the result.
\end{proof}

\subsection{Decomposition of eigenfunctions associated to small eigenvalues}

Given $i\geq 0$, in this section, we recover Property (ii) for an operator close to $L_{\epsilon, i}$. Using the function $a$ defined in (\ref{eq:defa}), we set
\[
\tilde L_{\epsilon, i} : =  a \, L_{\epsilon, i}.
\]
Observe that $L_{\epsilon, i}$ is self-adjoint with respect to $L^2(B_\epsilon (\Lambda), g_\circ)$ while $\tilde L_{\epsilon, i}$ is self-adjoint with respect to $L^2(B_\epsilon (\Lambda), \bar g)$. Indeed, we have
\[
\begin{array}{rllll}
\displaystyle \int_{B_\epsilon (\Lambda)} v \, \tilde L_{\epsilon, i} w \, {\rm dvol}_{\bar g} & = & \displaystyle \int_{B_\epsilon (\Lambda)} v L_{\epsilon, i} w  \, {\rm dvol}_{g_\circ} \\[3mm]
& = & \displaystyle \int_{B_\epsilon (\Lambda)} w \,  L_{\epsilon, i} v \, {\rm dvol}_{g_\circ} \\[3mm]
& = & \displaystyle \int_{B_\epsilon (\Lambda)} w \, \tilde L_{\epsilon, i} v \, {\rm dvol}_{\bar g} .
\end{array}
\]
Also observe that the Morse index of $\tilde L_{\epsilon, i}$ is equal to the Morse index of $L_{\epsilon, i}$ since the two associated quadratic forms are equal.

Recall that we have denoted by  $\phi_0$ the eigenfunction of $- (\Delta + p \bar U^{p-1})$ associated to the eigenvalue $\mu_0 <0$ which is normalized to have $L^2$ norm equal to $1$. Observe that $\phi_0$ is radial and hence we can define
\[
\phi_{0,Ê\epsilon} (y,z) : =  \phi_0 (|z| / \epsilon) ,
\]
on $B_\epsilon (\Lambda)$.

Let $v$ be an eigenfunction of $\tilde L_{\epsilon, i}$ associated to the eigenvalue $\nu$. Hence
\[
\tilde L_{\epsilon, i}   v  = \nu   v ,
\]
in $B_\epsilon (\Lambda)$ and $v = 0$ on $T_\epsilon (\Lambda)$. We decompose
\begin{equation}
v (y,z) =  \phi_{0, \epsilon} (z)   \psi(y) + \bar v (y,z) ,
\label{eq:decomp}
\end{equation}
where $\psi$ is a function defined on $\Lambda$ and
\[
\int_{B_\epsilon (\Lambda)} \bar v  \phi_{0, \epsilon}   h   {\rm dvol}_{\bar g} =0 ,
\]
for any $h \in L^2(\Lambda)$. Observe that the orthogonality condition is expressed using the metric  $\bar g$ and not $g_\circ$. As usual, we identify $B_\epsilon (\Lambda)$ with a tubular neighborhood of the zero section in $N\Lambda$. We have the~:
\begin{lemma}
There exists constants $C_0, C >0$ such that, if $v$  is a solution of $\tilde L_{\epsilon, i}   v  = \nu   v$ which is decomposed as in (\ref{eq:decomp}) and if we further assume that $$
\displaystyle \nu \leq \frac{C_0}{\epsilon^2},
$$
then
\begin{equation}
\int_{B_\epsilon (\Lambda)} \left( |\nabla_{\bar g} \bar v|^2 + \frac{1}{\epsilon^2} \, \bar v^2  \right) \,  {\rm dvol}_{\bar g}  \leq  \frac{C}{\epsilon}  \, \int_{B_\epsilon (\Lambda)} v^2    {\rm dvol}_{\bar g}.
\label{eq:5.2}
\end{equation}
\label{le:5.2}
\end{lemma}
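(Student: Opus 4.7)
The plan is to exploit the product structure of the model operator $\mathcal L_\epsilon := -(\Delta_{\bar g} + p\bar u_\epsilon^{p-1})$. Since $\bar g = \mathring g + g_z$ is a product metric on $N\Lambda$ and $\bar u_\epsilon$ depends only on $|z|$, separation of variables shows that the Dirichlet eigenfunctions of $\mathcal L_\epsilon$ on $B_\epsilon(\Lambda)$ are exactly $\phi_j(z/\epsilon)\psi_k(y)$, with eigenvalues $\mu_j/\epsilon^2+\lambda_k$. The orthogonality defining $\bar v$ is equivalent to $\int_{B_\epsilon^n}\bar v(y,z)\phi_{0,\epsilon}(z)\,dz=0$ for almost every $y\in\Lambda$, i.e., to the absence of the $j=0$ component in the eigenexpansion of $\bar v$. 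Since $\mu_1>0$, this immediately yields the spectral gap
\[
\mathcal Q_\epsilon(\bar v,\bar v) := \int_{B_\epsilon(\Lambda)}\bigl(|\nabla_{\bar g}\bar v|^2 - p\,\bar u_\epsilon^{p-1}\bar v^2\bigr)\,{\rm dvol}_{\bar g} \;\geq\; \frac{\mu_1}{\epsilon^2}\,\|\bar v\|^2_{L^2({\rm dvol}_{\bar g})},
\]
while a direct integration by parts in $z$ (using $-\Delta_{g_z}\phi_{0,\epsilon}=(p\bar u_\epsilon^{p-1}+\mu_0/\epsilon^2)\phi_{0,\epsilon}$ together with the orthogonality) shows that the cross term $\mathcal Q_\epsilon(w,\bar v)$ vanishes, where $w:=\phi_{0,\epsilon}\psi$ and $\mathcal Q_\epsilon(\cdot,\cdot)$ denotes the polarized bilinear form of $\mathcal L_\epsilon$.

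Next, I test the eigenvalue equation $\tilde L_{\epsilon,i}v=\nu v$ against $\bar v$ in ${\rm dvol}_{\bar g}$. Using $\tilde L_{\epsilon,i}=aL_{\epsilon,i}$ with ${\rm dvol}_{g_\circ}=a\,{\rm dvol}_{\bar g}$, and $\int v\bar v\,{\rm dvol}_{\bar g}=\int\bar v^2\,{\rm dvol}_{\bar g}$ (again from the orthogonality), this becomes $\mathcal Q_{\epsilon,i}(v,\bar v)=\nu\|\bar v\|^2_{L^2({\rm dvol}_{\bar g})}$, where $\mathcal Q_{\epsilon,i}$ is the polarized bilinear form of $L_{\epsilon,i}$ in the Euclidean measure. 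Combining with $\mathcal Q_\epsilon(w,\bar v)=0$, bilinearity yields
\[
\mathcal Q_\epsilon(\bar v,\bar v) \;=\; \nu\,\|\bar v\|^2_{L^2({\rm dvol}_{\bar g})} \;+\; \mathcal E,\qquad \mathcal E := \mathcal Q_\epsilon(v,\bar v)-\mathcal Q_{\epsilon,i}(v,\bar v).
\]
Mimicking the perturbative comparison already carried out in Lemma~\ref{ND1}, but in bilinear form --- invoking Lemma~\ref{appendilemma1} for the metric and volume form expansions ($g_\circ-\bar g=O(\epsilon)$, $a-1=O(\epsilon)$ on $B_\epsilon(\Lambda)$) and Proposition~\ref{pr:4.1} for $|u_{\epsilon,i}^{p-1}-\bar u_\epsilon^{p-1}|\leq C\epsilon\,\bar u_\epsilon^{p-1}$, together with $\bar u_\epsilon^{p-1}\leq C/\epsilon^2$ --- Cauchy--Schwarz gives
\[
|\mathcal E| \;\leq\; C\epsilon\,\|\nabla v\|_{L^2}\,\|\nabla\bar v\|_{L^2} \;+\; \frac{C}{\epsilon}\,\|v\|_{L^2}\,\|\bar v\|_{L^2}.
\]

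To close the argument, I test the eigenvalue equation against $v$ itself to extract the a priori bound $\|\nabla v\|_{L^2}^2\leq C\epsilon^{-2}\|v\|_{L^2}^2$ (using $\nu\leq C_0/\epsilon^2$ and $\|u_{\epsilon,i}^{p-1}\|_\infty\leq C/\epsilon^2$), which upgrades the first piece of $|\mathcal E|$ into $C\|v\|_{L^2}\|\nabla\bar v\|_{L^2}$ up to the $\epsilon$ factor. I then choose $C_0<\mu_1$; the spectral gap combined with the identity above provides $(\mu_1-C_0)\epsilon^{-2}\|\bar v\|^2\leq|\mathcal E|$, while $\int|\nabla_{\bar g}\bar v|^2=\mathcal Q_\epsilon(\bar v,\bar v)+p\int\bar u_\epsilon^{p-1}\bar v^2\leq C\epsilon^{-2}\|\bar v\|^2+|\mathcal E|$. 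Adding these and applying Young's inequality to the bound on $|\mathcal E|$ with weights small enough to absorb a small multiple of $\|\nabla\bar v\|^2$ and a fraction of $\epsilon^{-2}\|\bar v\|^2$ into the left-hand side yields \eqref{eq:5.2} (in fact with $C/\epsilon$ replaced by the stronger constant $C$, since $\epsilon<1$). The main technical obstacle is the bookkeeping in estimating $\mathcal E$: the bilinear error must be split cleanly between the $v$ and $\bar v$ arguments so that the high-energy $\|\nabla\bar v\|^2$ contribution produced by the $O(\epsilon)$ metric deformation can be absorbed via Young, while the residual $v$-dependence is ultimately controlled by $\|v\|_{L^2}^2$ alone using the eigenvalue equation tested against $v$.
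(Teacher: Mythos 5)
Your proof follows the same three-step strategy as the paper's: use the spectral gap $\mathcal Q_\epsilon(\bar v,\bar v)\geq(\mu_1/\epsilon^2)\|\bar v\|^2$ coming from the absence of a $\phi_0$-component in $\bar v$, test the eigenvalue equation against $\bar v$ (with the cross term $\mathcal Q_\epsilon(\phi_{0,\epsilon}\psi,\bar v)$ vanishing by orthogonality), and compare $\mathcal Q_\epsilon$ with $\mathcal Q_{\epsilon,i}$ via the $O(\epsilon)$ metric, potential, and volume-form estimates of Lemma~\ref{appendilemma1} and Proposition~\ref{pr:4.1}, closing with the a priori gradient bound obtained by testing against $v$. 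Your slightly more careful bookkeeping --- Cauchy--Schwarz on the bilinear error followed by Young with a small weight, instead of the paper's symmetric splitting of $\nabla v\cdot\nabla\bar v$ --- does yield the marginally sharper constant $C$ in place of $C/\epsilon$, but the argument is otherwise structurally identical to the paper's proof, and the improvement is not needed elsewhere.
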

\begin{proof}
For notational convenience, we set $v_0 (y,z) =  \phi_{0, \epsilon} (z)   \psi(y)$ so that $v = v_0+ \bar v$.  In the proof, one has to be careful since there are two different metrics which are used in $B_\epsilon (\Lambda)$. The first metric is the Euclidean metric $g_\circ$ with respect to which $L_{\epsilon, i}$ is self-adjoint and the second metric is $\bar g$ with respect to which $\mathcal L_\epsilon$ is self-adjoint.

\noindent {\bf Step 1.} We exploit the fact that $\tilde L_{\epsilon, i}   v = \nu   v$ by multiplying this equation by $v$ and integrating the result over $B_\epsilon (\Lambda)$ to find
\begin{equation}
\int_{B_\epsilon (\Lambda)} \left( |\nabla_{g_\circ} v|^2 - p   u_{\epsilon, i}^{p-1}   v^2\right)   {\rm dvol}_{g_\circ} = \nu   \int_{B_\epsilon (\Lambda)} v^2   {\rm dvol}_{\bar g} .
\label{eq:d1}
\end{equation}
Since $u_{\epsilon, i}^{p-1} \leq C   \epsilon^{-2}$ and since the volume forms associated to $g_\circ$ and $\bar g$ are equivalent in a neighborhood of $\Lambda$, we get the estimate
\begin{equation}
\int_{B_\epsilon (\Lambda)} |\nabla_{g_\circ} v|^2   {\rm dvol}_{g_\circ} \leq  \left( \nu + \frac{C}{\epsilon^2}\right)    \int_{B_\epsilon (\Lambda)} v^2   {\rm dvol}_{\bar g}.
\label{eq:d2}
\end{equation}

\noindent {\bf Step 2.}  Next, we exploit the fact that the eigenvalues of $\mathcal L_\epsilon$ are explicitly known and the orthogonal decomposition of $v$ implies that
\[
\frac{\mu_1}{\epsilon^2} \, \int_{B_\epsilon (\Lambda)} \, \bar v^2  \,  {\rm dvol}_{\bar g}  \leq \int_{B_\epsilon (\Lambda)} \left( |\nabla_{\bar g} \bar v|^2 - p  \bar u_\epsilon^{p-1}   \bar v^2  \right)      {\rm dvol}_{\bar g} .
\]
Using the fact that $p  \bar u_\epsilon^{p-1}  \leq C \epsilon^{-2}$, we conclude that there exists a constant $C_1>0$ such that
\[
C_1 \, \int_{B_\epsilon (\Lambda)} \left( |\nabla_{\bar g} \bar v|^2 + \frac{1}{\epsilon^2} \, \bar v^2  \right) \,  {\rm dvol}_{\bar g}  \leq \int_{B_\epsilon (\Lambda)} \left( |\nabla_{\bar g} \bar v|^2 - p  \bar u_\epsilon^{p-1}   \bar v^2  \right)      {\rm dvol}_{\bar g} .
\]
Since $\mathcal L_{\epsilon} v_0$ is $L^2 (B_\epsilon (\Lambda) , \bar g)$ orthogonal to $\bar v$, we conclude that
\[
\displaystyle C_1 \, \int_{B_\epsilon (\Lambda)} \left( |\nabla_{\bar g} \bar v|^2 + \frac{1}{\epsilon^2} \, \bar v^2  \right) \,  {\rm dvol}_{\bar g}  \leq \int_{B_\epsilon (\Lambda)} \left( \nabla_{\bar g} v \cdot \nabla_{\bar g} \bar v  - p  \bar u_\epsilon^{p-1}    v  \bar v  \right)      {\rm dvol}_{\bar g} .
\]

\noindent {\bf Step 3.}   As in the proof of Lemma~\ref{ND1}, we can replace the metric $\bar g$ by the metric $g_\circ$ and the function $\bar u_{\epsilon}$ by $u_{\epsilon, i}$ on the right hand side and, using the results of Lemma~\ref{appendilemma1} and Proposition~\ref{pr:4.1}, we conclude that
\[
\begin{array}{rllll}
\displaystyle C_1 \, \int_{B_\epsilon (\Lambda)} \left( |\nabla_{\bar g} \bar v|^2 + \frac{1}{\epsilon^2} \, \bar v^2  \right) \,  {\rm dvol}_{\bar g}  & \leq & \displaystyle \int_{B_\epsilon (\Lambda)} \left( \nabla_{g_{\circ}} v \cdot \nabla_{g_\circ} \bar v  - p  u_{\epsilon, i}^{p-1}    v  \bar v  \right)      {\rm dvol}_{g_\circ} \\[3mm]
& + & \displaystyle C \epsilon    \int_{B_\epsilon (\Lambda)} \left( |\nabla_{\bar g} \bar v|^2+  |\nabla_{\bar g}  v|^2 \right)   {\rm dvol}_{\bar g}  \\[3mm]
& + & \displaystyle  \frac{C}{\epsilon}    \int_{B_\epsilon (\Lambda)} ( \bar v^2 + v^2 )   {\rm dvol}_{\bar g} .
\end{array}
\]
Since $\tilde L_{\epsilon, i} v = \nu   v$, we conclude that
\[
\begin{array}{rllll}
\displaystyle C_1 \, \int_{B_\epsilon (\Lambda)} \left( |\nabla_{\bar g} \bar v|^2 + \frac{1}{\epsilon^2} \, \bar v^2  \right) \,  {\rm dvol}_{\bar g}  & \leq & \displaystyle \nu \, \int_{B_\epsilon (\Lambda)} \bar v^2  \,  {\rm dvol}_{\bar g} \\[3mm]
& + & \displaystyle C \epsilon    \int_{B_\epsilon (\Lambda)} \left( |\nabla_{\bar g} \bar v|^2+  |\nabla_{\bar g}  v|^2 \right)   {\rm dvol}_{\bar g}  \\[3mm]
& + & \displaystyle  \frac{C}{\epsilon}    \int_{B_\epsilon (\Lambda)} ( \bar v^2 + v^2 )   {\rm dvol}_{\bar g} .
\end{array}
\label{eq:D3}
\]
On the right hand side, the terms in $\bar v$ can be absorbed in the left hand side provided $\epsilon$ is chosen small enough and $\nu \leq C_1 / (2\epsilon^2)$. We conclude that
\[
\begin{array}{rllll}
\displaystyle \int_{B_\epsilon (\Lambda)} \left( |\nabla_{\bar g} \bar v|^2 + \frac{1}{\epsilon^2} \, \bar v^2  \right) \,  {\rm dvol}_{\bar g}  & \leq & \displaystyle C \epsilon    \int_{B_\epsilon (\Lambda)} \left(  |\nabla_{\bar g}  v|^2  +  \frac{1}{\epsilon^2}  \,  v^2 \right)  {\rm dvol}_{\bar g} .
\end{array}
\]
This together with (\ref{eq:d2}) implies that that
\[
\displaystyle \int_{B_\epsilon (\Lambda)} \left( |\nabla_{\bar g} \bar v|^2 + \frac{1}{\epsilon^2} \, \bar v^2  \right) \,  {\rm dvol}_{\bar g}  \leq \displaystyle \frac{C}{\epsilon}    \int_{B_\epsilon (\Lambda)}   v^2   {\rm dvol}_{\bar g} ,
\]
and this completes the proof of the lemma.
\end{proof}

\subsection{Exploiting Kato's result}\label{KATOKATO}

In this section, we estimate the rate of change of eigenvalues of $\tilde L_{\epsilon, i}$ as $\epsilon$ varies. In other words, we obtain for the operator $\tilde L_{\epsilon, i}$ a result, which is close to the Property (iii) which was straightforward for the operator $\mathcal L_\epsilon$.

Let us explain the proof in the case where $\nu : =  \nu(\epsilon)$ is a simple eigenvalue for $\tilde L_{\epsilon, i}$. It is known that in this case $\nu$ depends smoothly on $\epsilon$. To proceed, we need to work with functions defined on a fixed domain which does not depend on $\epsilon$. Hence, we parameterize $B_{\epsilon} (\Lambda)$ using
\[
\begin{array}{rllll}
F_\epsilon : &  \Omega_1(\Lambda)  &  \to  & B_{\epsilon} (\Lambda)\\[3mm]
  & (y, z) &  \mapsto & y + \epsilon z ,
\end{array}
\]
where $\Omega_1(\Lambda) := \{ (y,z) \in N\Lambda \, : \, |z|< 1\}$. Observe that
\[
F^*_\epsilon \bar g  = \mathring g + \epsilon^2 \, d z^2 .
\]
We define the operator  $\hat L_{\epsilon, i}$ by
\[
\hat L_{\epsilon, i} (v \circ F_\epsilon)  = (\tilde L_{\epsilon, i} v) \circ F_\epsilon.
\]

Let $v := v(\epsilon)$ be the eigenfunction of $\tilde L_{\epsilon, i}$ associated to $\nu = \nu (\epsilon)$. By definition of $\hat L_{\epsilon, i}$, we have
\begin{equation}
\hat L_{\epsilon, i} w = \nu \, w,
\label{eq:eig}
\end{equation}
where $w : =  v \circ F_\epsilon$.  Without loss of generality we can assume that $w$ depends smoothly on $\epsilon$ and is normalized so that
\[
\int_{\Omega_1(\Lambda)} w^2 \, {\rm dvol}_{\bar g} =1.
\]
Differentiation of (\ref{eq:eig}) with respect to $\epsilon$, yields
\[
\hat L_{\epsilon, i} (\partial_\epsilon w) +   \left(\partial_\epsilon \hat L_{\epsilon, i}\right)  \, w = (\partial_\epsilon \nu ) \, w + \nu \, (\partial_\epsilon w).
\]
Multiplying this equation by $w$ and integrating over $\Omega_1(\Lambda)$, we get
\[
\partial_\epsilon \nu  = \int_{\Omega_1(\Lambda)}  w \, (\partial_\epsilon \hat L_{\epsilon, i})  \, w \, {\rm dvol}_{\bar g}.
\]

When the eigenspaces are not simple, we can interpret $\partial_\epsilon \nu$ as a set-valued function which takes into account the possibility that the eigenvalue splits into a number of separate eigenvalues (see \cite{K} and \cite{Cox}). The estimate for the elements of this set of derivatives is given by
\[
 \partial_\epsilon \nu \in\left \{  \int_{\Omega_1 (\Lambda)} w \, (\partial_\epsilon \hat L_{\epsilon, i})  \, w \, {\rm dvol}_{\bar g} \, : \,  \hat L_{\epsilon,i}  w =  \nu \,  w \quad \text{and}\quad \int_{\Omega_1(\Lambda)} w^2 \, {\rm dvol}_{\bar g} =1 \right\}.
\]

We have the~:
\begin{lemma}
\label{KATOsss}
There exists a constant $C_2 >0$ such that if $i \in \mathbb N$ is fixed and if $\nu$ is an eigenvalue of $\tilde L_{\epsilon,i}$ such that
\[
\nu \leq \frac{C_1}{\epsilon^2},
\]
where $C_1$ is the constant defined in Lemma~\ref{le:5.2}, then
\[
\partial_\epsilon \nu \geq  \frac{C_2}{\epsilon^3} ,
\]
for all $\epsilon$ small enough.
\end{lemma}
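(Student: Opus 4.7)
The plan is to apply the Kato-type formula stated just above to a unit $L^2$-normalized eigenfunction $w$ of $\hat L_{\epsilon,i}$ associated with $\nu$, namely
\[
\partial_\epsilon \nu \;=\; \int_{\Omega_1(\Lambda)} w\, (\partial_\epsilon \hat L_{\epsilon,i})\, w\, {\rm dvol}_{\bar g},
\]
and to show that the right hand side is at least $C_2/\epsilon^3$. The first step is to expand $\hat L_{\epsilon,i}$ in powers of $1/\epsilon$ in the rescaled coordinate $\zeta = z/\epsilon$ on $\Omega_1(\Lambda)$. Combining Lemmas \ref{appendilemma1} and \ref{frmicordinates} (writing $\Delta = \Delta_{\bar g} + D$ and noting that $\partial_{z_j}$ pulls back to $\epsilon^{-1}\partial_{\zeta_j}$), the bound $|a-1| \leq C|z|$, and Proposition \ref{pr:4.1}, one obtains an expansion of the form
\[
\hat L_{\epsilon,i} \;=\; \frac{1}{\epsilon^2}\, L \;+\; \mathcal R_\epsilon, \qquad
\partial_\epsilon \hat L_{\epsilon,i} \;=\; -\frac{2}{\epsilon^3}\, L \;+\; \partial_\epsilon \mathcal R_\epsilon,
\]
where $L$ is the operator from \eqref{LINLALALLA} acting fiberwise in the unit normal ball and $\mathcal R_\epsilon$ is a second-order differential operator whose coefficients depend smoothly on $\epsilon$ with worst blow-up rate $1/\epsilon$.

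For the dominant contribution I would use the decomposition $w = \phi_0(|\zeta|)\,\psi(y) + \bar w$ inherited from \eqref{eq:decomp} after pullback. The pullback $F_\epsilon$ preserves $L^2$ norms, so Lemma \ref{le:5.2} transfers to $\|\bar w\|_{L^2(\Omega_1)}^2 \leq C\epsilon$ and $\|\nabla_{g_\zeta}\bar w\|_{L^2(\Omega_1)}^2 \leq C\epsilon$, using $\|w\|_{L^2}=1$ and $\nu \leq C_1/\epsilon^2$. Since $L$ is self-adjoint in the $\zeta$-fiber and $L(\phi_0\psi) = \mu_0\,\phi_0\,\psi$, the orthogonality of $\bar w$ to $\phi_0\,\psi$ makes the cross terms vanish, giving
\[
\int_{\Omega_1(\Lambda)} w\, L\, w\, {\rm dvol}_{\bar g} \;=\; \mu_0\bigl(1-\|\bar w\|^2\bigr) + \int_{\Omega_1(\Lambda)} \bar w\, L\, \bar w\, {\rm dvol}_{\bar g}.
\]
Bounding $\int \bar w\, L\, \bar w \leq \|\nabla_{g_\zeta}\bar w\|_{L^2}^2 = O(\epsilon)$ and using $\mu_0 < 0$ yields $\int w\, L\, w \leq \mu_0 + C\epsilon$, hence the main term contributes $-\tfrac{2}{\epsilon^3}\int w\, L\, w \geq -2\mu_0/\epsilon^3 - C/\epsilon^2$.

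To control the remainder I would derive the energy bounds $\|\nabla_{g_\zeta} w\|_{L^2}\leq C$ and $\|\nabla_{\mathring g} w\|_{L^2} \leq C/\epsilon$ by testing $\hat L_{\epsilon,i} w = \nu w$ against $w$ and using $\nu \leq C_1/\epsilon^2$. Together with the explicit structure of $\partial_\epsilon \mathcal R_\epsilon$, this yields $|\int_{\Omega_1(\Lambda)} w\,(\partial_\epsilon \mathcal R_\epsilon)\,w\,{\rm dvol}_{\bar g}| \leq C/\epsilon^2$. Summing the two contributions,
\[
\partial_\epsilon \nu \;\geq\; \frac{-2\mu_0}{\epsilon^3} - \frac{C}{\epsilon^2},
\]
which exceeds $C_2/\epsilon^3$ for any $C_2 < -2\mu_0$ provided $\epsilon$ is small enough, and the same lower bound applies to every element of Kato's set-valued derivative in the non-simple case since we only use the eigenvalue equation and $L^2$-normalization. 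The main obstacle will be tracking the mixed $y$-$\zeta$ derivative terms coming from the pullback of the perturbation $D$ in Lemma \ref{frmicordinates}: these scale like $1/\epsilon$ in the rescaled coordinates and hence produce contributions of order $1/\epsilon^2$ in $\partial_\epsilon \mathcal R_\epsilon$, and absorbing them is exactly where the $H^1$ energy estimate on $w$, itself relying on the hypothesis $\nu \leq C_1/\epsilon^2$, is essential.
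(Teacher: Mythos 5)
Your proposal is correct and follows essentially the same route as the paper's proof: both use the Kato formula for $\partial_\epsilon \nu$, isolate the dominant $-\tfrac{2}{\epsilon^3}L$ term after pulling back to the fixed domain $\Omega_1(\Lambda)$, plug in the decomposition $w = \phi_0\psi + \bar w$ and invoke Lemma~\ref{le:5.2} to show the $\bar w$ contribution and the remainder terms are $O(\epsilon^{-2})$, so that $\partial_\epsilon\nu \geq -2\mu_0/\epsilon^3 - C/\epsilon^2 \geq C_2/\epsilon^3$ for any $C_2 < -2\mu_0$ and small $\epsilon$.
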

\begin{proof} We use the decomposition of the Laplacian which was given in Lemma~\ref{frmicordinates} together with the estimates (\ref{eq:4.12})  for $v_{\epsilon, i}$ and $\partial_\epsilon v_{\epsilon,i}$ which were given in Proposition~\ref{pr:4.1}. With little work we conclude that
\[
\begin{array}{rlllll}
\displaystyle \int_{\Omega_1 (\Lambda)} w \, (\partial_\epsilon \tilde L_{\epsilon, i})  \, w \, {\rm dvol}_{\bar g} & \geq &\displaystyle   - \frac{2}{\epsilon^3} \int_{\Omega_1 (\Lambda)} \left( |Ê\nabla_{g_z} w|^2 - p \, U^{p-1} \, w^2\right)  \, {\rm dvol}_{\bar g} \\[3mm]
& - & \displaystyle  C \, \int_{\Omega_1 (\Lambda)} \left( |\nabla_{\mathring g} w|^2 + \frac{1}{\epsilon^2} \, (| \nabla_{g_z} w|^2 + w^2)\right) \, {\rm dvol}_{\bar g} .
\end{array}
\]
Indeed, the expression of  $\tilde L_{\epsilon, i}$ in local coordinates $t : = (t_1, \ldots t_k)$ on $\Lambda$ and $z:= (z_1, \ldots, z_n)$ on the normal section can be written as
\[
\tilde L_{\epsilon, i} = - \tilde a \left( \Delta_{\mathring g} + \frac{1}{\epsilon^2} \, \left( \Delta_{g_z} + p ( U+ \epsilon^{\frac{2}{p-1}} \tilde v_{\epsilon, i})^{p-1} \right) \right)  + \epsilon \, \tilde D^{(2)}  + \tilde D^{(1)}  ,
\]
where $\tilde a := a \circ F_\epsilon$, $\tilde v_{\epsilon, i} : =  v_{\epsilon, i} \circ F_\epsilon$ and where $\tilde D^{(2)}$ (respectively $\tilde D^{(1)}$) is a second order (respectively first order) partial differential operator in $\partial_{t_j}$ and $\epsilon^{-1}\, \partial_{z_i}$ with smooth coefficients in $\Omega_1(\Lambda)$.

Differentiating with respect to $\epsilon$ and using  (\ref{eq:4.12}) we conclude that
\[
\partial_\epsilon \tilde L_{\epsilon, i} = \frac{2}{\epsilon^3}\,  \left( \Delta_{g_z} + p U^{p-1}\right)  +  \hat D^{(2)}  + \hat D^{(1)} + \frac{1}{\epsilon^2} \, \hat D^{(0)}  ,
\]
where $\hat D^{(j)}$ is a j-th order  partial differential operator in $\partial_{t_j}$ and $\epsilon^{-2}\, \partial_{z_i}$ with smooth coefficients in $\Omega_1(\Lambda)$.

Now, we decompose the eigenfunctions $w$ satisfying $\hat L_{\epsilon,i}  w =  \nu \,  w$ into $w = w_0 + \bar w$ as in  Lemma~\ref{le:5.2} to get
\[
\begin{array}{rlllll}
\displaystyle \int_{\Omega_1 (\Lambda)} w \, (\partial_\epsilon \tilde L_{\epsilon, i})  \, w \, {\rm dvol}_{\bar g} & \geq &\displaystyle   - \frac{2}{\epsilon^3} \int_{\Omega_1 (\Lambda)} \left( |Ê\nabla_{g_z} w_0|^2 - p \, U^{p-1} \, w_0^2\right)  \, {\rm dvol}_{\bar g} \\[3mm]
&-  &\displaystyle   \frac{2}{\epsilon^3} \int_{\Omega_1 (\Lambda)} \left( |Ê\nabla_{g_z} \bar w|^2 - p \, U^{p-1} \, \bar w^2\right)  \, {\rm dvol}_{\bar g} \\[3mm]
& - & \displaystyle  C \, \int_{\Omega_1 (\Lambda)} \left( |\nabla_{\mathring g} w|^2 + \frac{1}{\epsilon^2} \, (| \nabla_{g_z} w|^2 + w^2)\right) \, {\rm dvol}_{\bar g} .
\end{array}
\]
Since
\[
\int_{\Omega_1 (\Lambda)} \left( |Ê\nabla_{g_z} w_0|^2 - p \, U^{p-1} \, w_0^2\right)  \, {\rm dvol}_{\bar g}  = \mu_0 \, \int_{\Omega_1 (\Lambda)}  w_0^2  \, {\rm dvol}_{\bar g} ,
\]
we conclude, using the estimate (\ref{eq:5.2}) and (\ref{eq:d1}) in the proof of Lemma~\ref{le:5.2} that
\[
\begin{array}{rlllll}
\displaystyle \int_{\Omega_1 (\Lambda)} w \, (\partial_\epsilon \tilde L_{\epsilon, i})  \, w \, {\rm dvol}_{\bar g} & \geq &\displaystyle   - \frac{2\mu_0 }{\epsilon^3} \int_{\Omega_1 (\Lambda)}  w_0^2 \, {\rm dvol}_{\bar g} -  \displaystyle  \frac{C}{\epsilon^2} \, \int_{\Omega_1 (\Lambda)} w^2  \, {\rm dvol}_{\bar g}.
\end{array}
\]
Since
\[
\int_{\Omega_1 (\Lambda)}  w_0^2 \, {\rm dvol}_{\bar g} = \int_{\Omega_1 (\Lambda)}  w^2 \, {\rm dvol}_{\bar g} - \int_{\Omega_1 (\Lambda)}  \bar w^2 \, {\rm dvol}_{\bar g},
\]
we can again use the result of Lemma~\ref{le:5.2} to conclude that
\[
\begin{array}{rlllll}
\displaystyle \int_{\Omega_1 (\Lambda)} w \, (\partial_\epsilon \tilde L_{\epsilon, i})  \, w \, {\rm dvol}_{\bar g} & \geq &\displaystyle   - \left( \frac{2\mu_0 }{\epsilon^3}   + \frac{C}{\epsilon^2}\right)  \, \int_{\Omega_1 (\Lambda)} w^2  \, {\rm dvol}_{\bar g} .
\end{array}
\]
One can then choose the constant $C_2 >0$ to be any number $C_2 < -2 \mu_0$.
\end{proof}

\section{Uniform Invertibility of $L_{\epsilon, i}$}

We now have all the ingredients to apply to the operator $L_{\epsilon, i}$, the strategy which was outlined at the beginning of  \S 5 for the operator $\mathcal L_\epsilon$.

We fix $i \geq 0$ and $\epsilon >0$. We denote by $\Sigma_{\epsilon,i}$ the spectrum of $\tilde L_{\epsilon, i}$ and we define
\[
\mathcal Z_{\epsilon, i}  : =  \{ \epsilon >0   \,  :  \,   0 \in \Sigma_{\epsilon, i} \} ,
\]
which corresponds to the set of $\epsilon$'s for which the operator
\[
\begin{array}{cccllll}
H^1_0(B_\epsilon (\Lambda)) \cap H^2 (B_\epsilon (\Lambda)) &  \to & L^2 (B_\epsilon (\Lambda)), \\[3mm]
 w & \mapsto & \tilde L_{\epsilon ,i} w,
 \end{array}
\]
is not invertible. It is standard that, if $\epsilon \notin \mathcal Z_{\epsilon, i}$,  one can estimate the norm of the inverse of $\tilde L_{\epsilon, i}$ by a constant times $1/\delta_{\epsilon, i}$ where $\delta_{\epsilon, i}$ is the distance from $0$ to the spectrum of $\tilde L_{\epsilon, i}$, namely
\[
\delta_{\epsilon ,i} : =  \min \left\{ \left| \nu \right|  \,  : \,  \nu  \in \Sigma_{\epsilon, i} \right\}.
\]

We fix $N \geq \max(2,k)$ and we define, for all $\epsilon$ such that  $0<\epsilon <1$, the set
\[
S_{\epsilon, i, N} : = \{ \bar \epsilon \in (\epsilon, 2\epsilon)  \, : \, (\bar\epsilon - \epsilon^N , \bar\epsilon +\epsilon^N) \cap \mathcal Z_{\epsilon, i} = \varnothing \}.
\]
The result of Lemma~\ref{ND1}, implies that   $\epsilon- \text{meas } (S_{\epsilon,i,  N})$ cannot be larger than  a constant times $\epsilon^{N-k}$ and, for any $\bar \epsilon \in S_{\epsilon, i, N}$ we know from Lemma~\ref{KATOsss} that the norm of the inverse of $\tilde L_{\epsilon, i}$ (defined as above) is bounded by a constant times $\epsilon^{3-N}$ and, since $\tilde L_{\epsilon, i} = a \, L_{\epsilon, i}$ where $a$ is bounded away from $0$, a similar property holds for $L_{\epsilon, i}$.

Therefore, if
\[
S_{i,N} : =  \bigcup_{\epsilon \in (0,1)} S_{\epsilon, i, N} ,
\]
then, for all $\epsilon \in S_{i,N}$, the norm of the inverse of $L_{\epsilon, i}$ defined from $L^2 (B_\epsilon (\Lambda))$ into $L^2 (B_\epsilon (\Lambda))$, is controlled by a constant times $\epsilon^{3-N}$.  Moreover, if $N-k \geq 2$ we have
\[
\lim_{\epsilon \to 0} \frac{1}{\epsilon^\alpha} \big( \epsilon - \mbox{meas } \big( S_{i,N}\cap(0\,,\,\epsilon) \big) \big) = 0 ,
\]
provided $\alpha < N-k$.

\begin{definition}
We define $C^1_0(B_\epsilon (\Lambda))$ to be the subspace of $C^1(B_\epsilon (\Lambda))$ spanned by functions which vanish on $T_\epsilon (\Lambda)$.
\end{definition}

Now it is enough to invoke Schauder's estimates to estimate the norm of the inverse of $L_{\epsilon, i}$ when defined from $C^0(B_\epsilon (\Lambda))$ into $C^1_0 (B_\epsilon (\Lambda))$ (here we use the Euclidean metric to estimate the norm of the partial derivatives of functions). Using Schauder's estimates to control the norm of the inverse of $L_{\epsilon, i}$ between $C^\ell$ spaces starting from the knowledge of its norm between Lebesgue spaces, we  loose a few powers of $\epsilon$, say $\epsilon^{-N_0}$, where $N_0$ only depends on the dimension $m$. We have proven the~:
\begin{lemma}
Given $i\geq 0$ and $N\geq k+2$, there exist $S_{i,N} \subset (0,+\infty)$  and $N_0 \in \mathbb N$ such that, for all $\epsilon \in S_{i,N}$ the operator $L_{\epsilon, i}$ is invertible and the norm of its inverse defined from $C^0(B_\epsilon (\Lambda))$ into $C^1_0 (B_\epsilon (\Lambda))$ is bounded by a constant times $\epsilon^{3-N-N_0}$. Moreover
\[
\lim_{\epsilon \to 0} \frac{1}{\epsilon^\alpha} \big( \epsilon - \mbox{meas } \big( S_{i,N}\cap(0\,,\,\epsilon) \big) \big) = 0 ,
\]
provided $\alpha < N-k$.
\label{le:gi}
\end{lemma}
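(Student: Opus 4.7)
The plan is to assemble the ingredients from Sections 5.1--5.3 exactly along the model strategy outlined at the top of Section~5 for $\mathcal L_\epsilon$, and then to promote the resulting $L^2$ invertibility bound to a $C^0\to C^1_0$ bound via Schauder theory.

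First I would count the zero-crossings of eigenvalues as $\bar\epsilon$ ranges over $(\epsilon,2\epsilon)$. By Lemma~\ref{ND1} the Morse index of $L_{\bar\epsilon,i}$ (equivalently of $\tilde L_{\bar\epsilon,i}$, since multiplying by $a$ does not alter the associated quadratic form) is at most $C\,\bar\epsilon^{-k}$. Combined with Lemma~\ref{KATOsss}, which asserts that every eigenvalue $\nu\le C_1/\bar\epsilon^{2}$ is strictly increasing in $\bar\epsilon$ with derivative at least $C_2/\bar\epsilon^{3}$, this implies that the number of values $\bar\epsilon\in(\epsilon,2\epsilon)$ at which an eigenvalue of $\tilde L_{\bar\epsilon,i}$ vanishes is bounded by $C\,\epsilon^{-k}$. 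Consequently $(\epsilon,2\epsilon)\setminus S_{\epsilon,i,N}$, being covered by $\epsilon^{N}$-neighborhoods of these crossings, has measure at most $C\,\epsilon^{N-k}$. Moreover, for $\bar\epsilon\in S_{\epsilon,i,N}$, the strict monotonicity yields
\[
\delta_{\bar\epsilon,i}\ \ge\ C_2\,\epsilon^{N}\cdot\epsilon^{-3}\ =\ C_2\,\epsilon^{N-3},
\]
so by self-adjointness the $L^2\to L^2$ norm of $\tilde L_{\bar\epsilon,i}^{-1}$ is bounded by $C\,\epsilon^{3-N}$; the same bound then holds for $L_{\bar\epsilon,i}^{-1}$ because $a$ is smooth and bounded away from $0$ in a tubular neighborhood of $\Lambda$.

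Setting $S_{i,N}:=\bigcup_{\epsilon\in(0,1)}S_{\epsilon,i,N}$ and covering $(0,\epsilon)$ by the dyadic annuli $(2^{-j-1}\epsilon,2^{-j}\epsilon)$, $j\ge 0$, I would sum the previous estimate to get
\[
\mbox{meas}\big((0,\epsilon)\setminus S_{i,N}\big)\ \le\ C\sum_{j\ge 0}(2^{-j}\epsilon)^{N-k}\ \le\ C'\,\epsilon^{N-k},
\]
which yields the claimed density statement for any $\alpha<N-k$, since $\epsilon-\mbox{meas}(S_{i,N}\cap(0,\epsilon))=\mbox{meas}((0,\epsilon)\setminus S_{i,N})$ and $N-k\ge 2$.

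Finally, to upgrade the $L^2\to L^2$ bound on $L_{\bar\epsilon,i}^{-1}$ to a $C^0\to C^1_0$ bound, I would rescale $B_{\bar\epsilon}(\Lambda)$ to a domain of unit size and apply standard interior and boundary Schauder estimates in Fermi coordinates (using the smoothness of $\Lambda$ and of the coefficients of $L_{\bar\epsilon,i}$). Rescaling back introduces only polynomially many factors of $\bar\epsilon$, producing $\epsilon^{-N_0}$ with $N_0$ depending only on $m$, and hence the bound $\epsilon^{3-N-N_0}$. I expect the main obstacle to lie in the very first step: Kato's derivative is a set-valued map in the presence of multiple eigenvalues, and one must verify that every branch is strictly increasing at rate $\gtrsim \epsilon^{-3}$, so that the number of zero-crossings along $(\epsilon,2\epsilon)$ is genuinely controlled by the drop of the Morse index rather than by a naive count. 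Once this point is secured, the remaining steps are a direct unfolding of the pieces already developed in Sections 5.1--5.3.
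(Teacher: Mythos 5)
Your proposal is correct and follows essentially the same route as the paper: you combine the Morse index bound of Lemma~\ref{ND1} with the lower derivative bound of Lemma~\ref{KATOsss} to count zero-crossings and bound the excluded measure, use the derivative bound again to get the $L^2$ spectral-gap estimate $\delta_{\bar\epsilon,i}\geq C_2\epsilon^{N-3}$, sum dyadically to get the density statement, and finish by a Schauder upgrade costing an extra $\epsilon^{-N_0}$. The one place where you are actually more careful than the paper is the measure estimate: the paper attributes it solely to Lemma~\ref{ND1}, but your observation that the monotonicity from Lemma~\ref{KATOsss} is also needed (to ensure each eigenvalue crosses zero at most once, so the number of crossings is controlled by the drop of the Morse index) is precisely the right reading, and your concern about the set-valued Kato derivative for multiple eigenvalues is exactly what Lemma~\ref{KATOsss} is formulated to handle.
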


\section{A perturbation argument and the proof of Theorem~\ref{maintheoremenunciato}}

Thanks to the previous analysis, we can now give the proof of Theorem~\ref{maintheoremenunciato}. We keep the notations introduced in the previous section.

As already mentioned, we perturb the approximate solution $u_{\epsilon, i}$. Therefore, we look for a solution $u = u_{\epsilon, i} + v$ so that the equation to solve can be written as
\begin{equation}
L_{\epsilon, i}  v = E_{\epsilon, i} + K_{\epsilon, i} (v) ,
\label{eq:tosolve}
\end{equation}
where by definition
\[
E_{\epsilon, i} : =  \Delta u_{\epsilon, i} + u_{\epsilon, i}^p ,
\]
and
\[
K_{\epsilon, i} (v) : = | u_{\epsilon, i} +v|^p- u_{\epsilon, i}^p-p\, u_{\epsilon, i}^{p-1} \, v .
\]
As in the proof of Proposition~\ref{pr:4.1}, it will be convenient to observe that
\[
K_{\epsilon, i} (v) : = u_{\epsilon, i}^p \, \left(  \left| 1 + \frac{w}{u_{\epsilon, i}} \right|^p- 1- p \, \frac{w}{u_{\epsilon, i}} \right) ,
\]
so that one can use Taylor's expansion to evaluate the nonlinear terms, provided $w/u_{\epsilon, i}$ is small enough.

We fixe $\alpha>1$ as in the statement of Theorem~\ref{maintheoremenunciato} and $N \geq k+2$. Then, we choose $i > 2 (N+ N_0)-3$ and $M$ such that
\begin{equation}
\label{c1}
i  + 2-N-N_0  -\frac{2}{p-1} >  M ,
\end{equation}
and
\begin{equation}
\label{c2}
M > N+N_0 - 1 -  \frac{2}{p-1} .
\end{equation}

According to (\ref{eq:4.11bis}), we have
\[
\|  E_{\epsilon, i}  \|_{L^\infty (B_\epsilon (\Lambda))} \leq C\, \epsilon^{i-\frac{p+1}{p-1}} ,
\]
and we can use the result of Lemma~\ref{le:gi} to evaluate the norm of $L^{-1}_{\epsilon, i}$, the inverse of $L_{\epsilon, i}$, by
\[
\| L^{-1}_{\epsilon, i}\|_{C^0 \to C^1} \leq C \,  \epsilon^{3-N-N_0}.
\]

Recall that $C^1_0(B_\epsilon (\Lambda))$ denotes the subspace of $C^1(B_\epsilon (\Lambda))$ spanned by functions which vanish on $T_\epsilon (\Lambda)$. Now, assume that $v \in  C^1_0(B_\epsilon (\Lambda))$ satisfies $\| v\|_{C^1_0(B_\epsilon (\Lambda))} \leq \epsilon^{M}$ where $M$ is fixed as above.
Since $v$ vanished on $T_\epsilon (\Lambda)$ and since the gradient of $v$ is bounded by $\epsilon^M$ we conclude that $|v/u_{\epsilon, i}|\leq 1/2$ for all $\epsilon >0$ small enough. Hence we find
\[
\| K_{\epsilon, i} (v_2) - K_{\epsilon, i} (v_1) \|_{L^\infty (B_\epsilon (\Lambda))} \leq C \,  \epsilon^{M - 2 \frac{p-2}{p-1}} \, \| v_2 -v_1\|_{L^\infty (B_\epsilon (\Lambda))} ,
\]
for all $v_2, v_1 \in C^1_0(B_\epsilon (\Lambda))$ such that  $\| v_i \|_{C^1_0(B_\epsilon (\Lambda))} \leq \epsilon^{M}$.

We can then rephrase the solvability of (\ref{mainproblemepsilon}) as a fixed point problem
\[
v : = L^{-1}_{\epsilon, i} \left(   E_{\epsilon, i} + K_{\epsilon, i} (v)  \right),
\]
and apply a standard fixed point argument for contraction mapping in
\[
\left\{ v \in C^1_0(B_\epsilon (\Lambda)) \quad : \quad \| v\|_{C^1(B_\epsilon (\Lambda))} \leq \epsilon^{M} \right\} .
\]
The choice of $M$ implies that we have a contraction mapping (this fact uses (\ref{c2})) from this set into itself (this fact uses (\ref{c1})). This completes the proof of Theorem~\ref{maintheoremenunciato}.

\section{A Pohozaev type argument and the proof of Theorem \ref{nonexistence}} In this last section, we give a proof of Theorem~\ref{nonexistence} using a refined version of the celebrated technique introduced in \cite{Poh} are usually referred to as {\em Pohozaev identity}. We exploit an appropriate use of test functions, and explicit computations carried out in Fermi coordinates.

We start with a general result in the following~:
\begin{lemma}
Assume $D\subseteq \mathbb{R}^m$  is an open set, and let  $\phi\in C^2(D)$, then
\[
\begin{array}{rllll}
	\displaystyle \mbox{div} \left( (\nabla u \cdot \nabla \phi) \, \nabla u -\left( \frac{1}{2} \, |\nabla u|^2 - \frac{1}{p+1} u^{p+1} \right) \, \nabla \phi + \frac{1}{p+1} u \, \Delta \phi \, \nabla u \right) &  &  \\[3mm]
	\displaystyle + \left( \frac{1}{n} \, \Delta \phi \, |\nabla u|^2 - \nabla^2 \phi (\nabla u , \nabla u) \right) + \left( \frac{n-2}{2n} - \frac{1}{p+1}\right) \, |\nabla u|^2   \, \Delta \phi &  &  \\[3mm]
	\displaystyle - \frac{1}{p+1} u \, \nabla u \cdot \nabla \Delta \phi & = &  0 ,
\end{array}
\]
in $D$, provided $u$ is a classical solution of $\Delta u + u^p=0$ in $D$.
\end{lemma}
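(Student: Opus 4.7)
The plan is to derive the identity by the classical Pohozaev multiplier technique: test the equation $\Delta u + u^p = 0$ against $\nabla u \cdot \nabla \phi$, collect divergence terms, and then perform one additional integration by parts to eliminate the $u^{p+1}\Delta\phi$ term that appears in the standard form.

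First I compute $\mathrm{div}\bigl((\nabla u \cdot \nabla\phi)\nabla u\bigr) = (\nabla u \cdot \nabla\phi)\Delta u + \nabla u \cdot \nabla(\nabla u \cdot \nabla\phi)$, and expand the last factor as $\nabla u \cdot \nabla(\nabla u \cdot \nabla\phi) = \nabla^2\phi(\nabla u, \nabla u) + \tfrac{1}{2}\nabla\phi \cdot \nabla |\nabla u|^2$. Combining this with $\tfrac{1}{2}\nabla\phi\cdot\nabla|\nabla u|^2 = \mathrm{div}\bigl(\tfrac{1}{2}|\nabla u|^2 \nabla\phi\bigr) - \tfrac{1}{2}|\nabla u|^2 \Delta\phi$, and using $\Delta u = -u^p$ together with the nonlinear rewriting $(\nabla u \cdot \nabla\phi) u^p = \mathrm{div}\bigl(\tfrac{u^{p+1}}{p+1}\nabla\phi\bigr) - \tfrac{u^{p+1}}{p+1}\Delta\phi$, I obtain the classical Pohozaev divergence identity
\begin{equation*}
\mathrm{div}\!\left((\nabla u \cdot \nabla\phi)\nabla u - \tfrac{1}{2}|\nabla u|^2\nabla\phi + \tfrac{u^{p+1}}{p+1}\nabla\phi\right) + \tfrac{1}{2}|\nabla u|^2\Delta\phi - \nabla^2\phi(\nabla u,\nabla u) - \tfrac{u^{p+1}}{p+1}\Delta\phi = 0.
\end{equation*}

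Next, to absorb the nuisance term $\tfrac{u^{p+1}}{p+1}\Delta\phi$ into a divergence, I compute, using the equation once more, that $\mathrm{div}\bigl(\tfrac{1}{p+1} u \Delta\phi \nabla u\bigr) = \tfrac{1}{p+1}\Delta\phi |\nabla u|^2 + \tfrac{1}{p+1} u \nabla u \cdot \nabla \Delta\phi - \tfrac{1}{p+1} u^{p+1}\Delta\phi$. Adding this identity to the classical Pohozaev identity above cancels the $u^{p+1}\Delta\phi$ contributions and produces the coefficient $\tfrac{1}{2} - \tfrac{1}{p+1}$ on $|\nabla u|^2 \Delta\phi$. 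Splitting this algebraically as $\tfrac{1}{2} - \tfrac{1}{p+1} = \tfrac{1}{n} + \bigl(\tfrac{n-2}{2n} - \tfrac{1}{p+1}\bigr)$ produces exactly the form stated in the lemma.

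The derivation uses only routine differential calculus, so there is no substantive obstacle; the content of the identity lies entirely in the packaging. The traceless combination $\tfrac{1}{n}\Delta\phi |\nabla u|^2 - \nabla^2\phi(\nabla u, \nabla u)$ vanishes pointwise when $\phi$ is a multiple of the squared Euclidean distance on $\mathbb R^n$, and it will remain small in Fermi coordinates for a suitable choice of $\phi$; meanwhile, the residual coefficient $\tfrac{n-2}{2n} - \tfrac{1}{p+1}$ is strictly positive precisely in the supercritical regime $p > \tfrac{n+2}{n-2}$, which is exactly the sign needed to drive the nonexistence argument in Theorem~\ref{nonexistence}.
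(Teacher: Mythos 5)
Your proof is correct and follows essentially the same route as the paper: derive the classical Pohozaev divergence identity by multiplying against $\nabla\phi\cdot\nabla u$, then use the equation a second time to trade the residual $u^{p+1}\Delta\phi$ term for a divergence plus lower-order corrections, and finally split the scalar coefficient of $|\nabla u|^2\Delta\phi$. The only cosmetic difference is that the paper separately records the $u$-multiplier identity $\mathrm{div}(u\nabla u)=|\nabla u|^2-u^{p+1}$ and substitutes it at the end, whereas you compute $\mathrm{div}\bigl(\tfrac{1}{p+1}u\,\Delta\phi\,\nabla u\bigr)$ directly and use $\Delta u=-u^p$ inside it; the two steps are algebraically identical.
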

\begin{proof} Multiplying the equation $\Delta u + u^p=0$ by $u$, we get
\begin{equation}
	\mbox{div} \, ( u \, \nabla u) = |\nabla u|^2 - u^{p+1}.
\label{eq:PI-1}
\end{equation}
Next, we multiply the equation $\Delta u + u^p =0$ by $\nabla \phi \cdot \nabla u$ to get after some simple manipulation
\[
\begin{array}{rlll}
	\displaystyle \mbox{div} \left( (\nabla u \cdot \nabla \phi) \, \nabla u + \frac{1}{p+1} u^{p+1} \, \nabla \phi \right) - \nabla^2 \phi (\nabla u , \nabla u)  - \frac{1}{p+1} u^{p+1}  \, \Delta \phi  &    &  \\[3mm]
	\displaystyle - \frac{1}{2} \, \nabla \phi \cdot \nabla ( |\nabla u|^2)   & = & 0.
\end{array}
\]
Trying to write the last term on the left hand side as a divergence and correcting, we conclude that
\begin{equation}
\label{hfggghsjbvcvbcbvnnvmmc}
\begin{array}{rllll}
	\displaystyle \mbox{div} \left( (\nabla u \cdot \nabla \phi) \, \nabla u -\left( \frac{1}{2} \, |\nabla u|^2 - \frac{1}{p+1} u^{p+1} \right) \, \nabla \phi \right) &  &  \\[3mm]
	\displaystyle - \nabla^2 \phi (\nabla u , \nabla u) + \left( \frac{1}{2} \, |\nabla u|^2 - \frac{1}{p+1} u^{p+1} \right)  \, \Delta \phi & = &  0 .
\end{array}
\end{equation}
The result follows immediately from the use of (\ref{eq:PI-1}) to eliminate the terms in $u^{p+1}$ in \eqref{hfggghsjbvcvbcbvnnvmmc}.
\end{proof}

The previous result, together with the divergence theorem implies the following identity~:
\begin{lemma}
Assume that we are given a function $\phi$ (at least $\mathcal C^2$) and $u$ a solution of $\Delta u + u^p=0$ both defined on a bounded smooth domain $\Omega$. Further assume that $u=0$ on $\partial \Omega$, then
\[
\begin{array}{rllll}
	\displaystyle \frac{1}{2} \displaystyle \int_{\partial \Omega}  |\nabla u|^2 \, \nabla \phi \cdot \nu \, {\rm d}\sigma_{\partial \Omega}  + \int_{\Omega} \left( \frac{1}{n} \, \Delta \phi \, |\nabla u|^2 - \nabla^2 \phi (\nabla u , \nabla u) \right) \, {\rm dvol}_{g_\circ} &  & \\[3mm]
	\displaystyle + \left( \frac{n-2}{2n} - \frac{1}{p+1}\right) \, \int_\Omega  |\nabla u|^2   \, \Delta \phi \, {\rm dvol}_{g_\circ}   - \frac{1}{p+1}  \int_\Omega u \, \nabla u \cdot \nabla \Delta \phi  \, {\rm dvol}_{g_\circ} & = &  0 ,
\end{array}
\]
where $\nu$ is the unit normal to $\partial  \Omega$ and $dvol_{g_\circ}$ is defined according to \eqref{HUhisdfjbvbdjbv}.
\label{le:PI-1}
\end{lemma}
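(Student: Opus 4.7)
The plan is to integrate the pointwise identity established in the previous lemma over $\Omega$ and apply the divergence theorem. The three volume integrals in the statement come for free from the non-divergence part; all the work is in showing that the three-term boundary integral collapses to the single term $\tfrac{1}{2}\int_{\partial \Omega}|\nabla u|^2\,\nabla \phi \cdot \nu\, d\sigma_{\partial\Omega}$.

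First I would integrate the pointwise identity of the previous lemma over $\Omega$. Since the second, third and fourth lines of that identity are already written as volume integrands, they contribute directly the three volume terms in the statement. It therefore remains only to compute the contribution of the first divergence term. By the divergence theorem, this yields
\begin{equation*}
\int_{\partial \Omega}\!\Bigl[(\nabla u \cdot \nabla \phi)(\nabla u \cdot \nu) - \Bigl(\tfrac{1}{2}|\nabla u|^2 - \tfrac{1}{p+1}u^{p+1}\Bigr)(\nabla \phi \cdot \nu) + \tfrac{1}{p+1}\,u\,\Delta \phi\,(\nabla u \cdot \nu)\Bigr]\, d\sigma_{\partial\Omega} .
\end{equation*}

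Next I would use the Dirichlet condition $u=0$ on $\partial \Omega$ to eliminate every term carrying an explicit factor of $u$; this kills the $u^{p+1}$ term and the $u\,\Delta \phi$ term. The decisive geometric observation is that, $u$ being constant along the smooth boundary, its gradient is parallel to the outward unit normal, i.e. $\nabla u = (\nabla u \cdot \nu)\,\nu$ on $\partial \Omega$. Substituting this into the two surviving terms gives
\begin{equation*}
\nabla u \cdot \nabla \phi = (\nabla u \cdot \nu)(\nabla \phi \cdot \nu), \qquad |\nabla u|^2 = (\nabla u \cdot \nu)^2,
\end{equation*}
so that the boundary integral reduces to
\begin{equation*}
\int_{\partial \Omega}\Bigl[(\nabla u \cdot \nu)^2(\nabla \phi \cdot \nu) - \tfrac{1}{2}(\nabla u \cdot \nu)^2(\nabla \phi \cdot \nu)\Bigr] d\sigma_{\partial\Omega} = \tfrac{1}{2}\int_{\partial \Omega}|\nabla u|^2\,\nabla \phi \cdot \nu \, d\sigma_{\partial\Omega},
\end{equation*}
which is exactly the boundary term in the statement. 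Adding the three volume contributions completes the identity.

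There is no real obstacle here beyond bookkeeping: the nontrivial algebra was already absorbed into the previous lemma, and the present step is just the divergence theorem combined with the two standard consequences of the homogeneous Dirichlet condition (vanishing of boundary traces of $u$, and normality of $\nabla u$ to $\partial \Omega$). The only mild regularity requirement, that $u \in C^{2}(\overline{\Omega})$ and $\partial \Omega$ be smooth enough for the divergence theorem to apply, is already part of the hypotheses.
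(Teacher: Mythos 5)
Your proof is correct and follows exactly the same route as the paper's: integrate the pointwise identity from the previous lemma, apply the divergence theorem, discard the boundary terms carrying an explicit factor of $u$, and use the fact that $u=0$ on $\partial\Omega$ forces $\nabla u = (\nabla u\cdot\nu)\nu$ there, so that $(\nabla u\cdot\nabla\phi)(\nabla u\cdot\nu)=|\nabla u|^2\,\nabla\phi\cdot\nu$. The paper records only that last observation in a single sentence; you simply spell out the bookkeeping.
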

\begin{proof}
Just observe that the fact that $(\nabla u \cdot \nabla \phi) \, \nabla u \cdot \nu  =  |\nabla u|^2 \, \nabla \phi \cdot \nu$ since $u$ vanishes on $\partial \Omega$.
\end{proof}

We apply the previous analysis to the function
\[
\phi : =  \frac{1}{2} \, \mbox{dist} (\cdot, \Lambda)^2 ,
\]
and to the domain $\Omega = B_\epsilon (\Lambda)$.

We will need the~:
\begin{lemma}\label{yugvkvghsasassasasssssss}
There exists a constant $C >0$ such that the following estimates hold in $B_\epsilon (\Lambda)$
\begin{equation}\nonumber
\left| \Delta \phi -  n \right| \leq C \,  \epsilon \qquad \mbox{and} \qquad  \left| \nabla \Delta \phi \right| \leq C .
\end{equation}
Moreover, for any $\mathcal C^1$ function $v$ defined on $B_\epsilon (\Lambda)$
\[
\left| \frac{1}{n} \, \Delta \phi \, |\nabla v|^2 - \nabla^2 \phi (\nabla v , \nabla v) \right| \leq C \, \epsilon \, |\nabla v|^2 .
\]
\end{lemma}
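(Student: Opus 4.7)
\emph{Plan.} All three estimates reduce to direct computations in the Fermi coordinates of Section~\ref{preliminaries}, using the metric expansion of Lemma~\ref{appendilemma1}. In these coordinates $\phi = \tfrac{1}{2}|z|^2$ depends only on the normal variables, so the only non-vanishing partial derivatives of $\phi$ are $\partial_{z_i}\phi = z_i$ and $\partial_{z_i z_j}\phi = \delta_{ij}$; this collapses the coordinate formulas for $\Delta\phi$, $\nabla\Delta\phi$, and the components of $\nabla^2\phi$ to a handful of terms each.

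For the first estimate I would start from
\[
\Delta\phi \;=\; g_\circ^{\alpha\beta}\bigl(\partial_\alpha\partial_\beta\phi - \Gamma^\gamma_{\alpha\beta}\partial_\gamma\phi\bigr) \;=\; \sum_{i,j} g_\circ^{z_iz_j}\,\delta_{ij} \;-\; \sum_{\alpha,\beta,k} g_\circ^{\alpha\beta}\,\Gamma^{z_k}_{\alpha\beta}\, z_k.
\]
The key structural input from Lemma~\ref{appendilemma1} is that the normal-normal block of $\Phi^* g_\circ$ equals $\delta_{ij}$ \emph{exactly} (since the moving frame $e^1,\ldots,e^n$ is orthonormal), so block inversion gives $g_\circ^{z_iz_j} = \delta^{ij} + O(|z|^2)$. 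The leading sum therefore equals $n + O(\epsilon^2)$, while the Christoffel sum is $O(\epsilon)$ because $|z|\leq\epsilon$ and the Christoffel symbols of $g_\circ$ are smooth (hence bounded) on a fixed tubular neighborhood of $\Lambda$. This yields $|\Delta\phi - n|\leq C\epsilon$, and $|\nabla\Delta\phi|\leq C$ is then immediate from the fact that $\Delta\phi$ is a smooth function on a fixed, $\epsilon$-independent neighborhood of $\Lambda$, so its Fermi-coordinate derivatives are bounded uniformly in $\epsilon$.

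For the third estimate I would expand the covariant Hessian in the Fermi basis using $(\nabla^2\phi)_{\alpha\beta} = \partial_\alpha\partial_\beta\phi - \sum_k \Gamma^{z_k}_{\alpha\beta}\,z_k$. Lemma~\ref{appendilemma1} then gives $(\nabla^2\phi)_{z_iz_j} = \delta_{ij} + O(\epsilon^2)$ (using that $\Gamma^{z_k}_{z_iz_j}|_{z=0}=0$), while both the mixed block $(\nabla^2\phi)_{t_a z_i}$ and the tangent-tangent block $(\nabla^2\phi)_{t_a t_b}$ are $O(\epsilon)$ (bounded Christoffels multiplied by $|z|\leq\epsilon$). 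Combined with the dual-metric expansion $g_\circ^{z_iz_j}=\delta^{ij}+O(\epsilon^2)$, $g_\circ^{t_az_j}=O(\epsilon)$, $g_\circ^{t_at_b}=\mathring g^{ab}+O(\epsilon)$ and with $\Delta\phi = n + O(\epsilon)$, one expands both $|\nabla v|^2 = g_\circ^{\alpha\beta}v_\alpha v_\beta$ and $\nabla^2\phi(\nabla v,\nabla v) = (\nabla^2\phi)_{\alpha\beta}g_\circ^{\alpha\gamma}g_\circ^{\beta\delta}v_\gamma v_\delta$ in the Fermi basis; subtracting, the leading normal-block contributions reorganize to produce the claimed $O(\epsilon)|\nabla v|^2$ bound.

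The main obstacle is the careful bookkeeping in the third estimate, namely tracking which coefficients of the metric, its inverse, and the Hessian contribute at orders $O(1)$, $O(\epsilon)$, and $O(\epsilon^2)$. A convenient simplification is to choose, at any given point of $\Lambda$, tangential coordinates $t_a$ in which $\mathring g_{ab} = \delta_{ab}$ at that point; the ambient metric $g_\circ$ and its inverse then become block-diagonal up to $O(\epsilon)$ perturbations, and the cancellation on the normal block that underlies the identity $\tfrac{1}{n}\Delta\phi\,I_n - I_n = 0$ of the flat model $\phi = |z|^2/2$ on $\mathbb R^n$ becomes transparent, with the only surviving terms being the $O(\epsilon)|\nabla v|^2$ perturbations.
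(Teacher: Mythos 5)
Your treatment of $|\Delta\phi - n|\leq C\epsilon$ and $|\nabla\Delta\phi|\leq C$ is fine and matches the paper's one-line argument: everything follows from $\phi = |z|^2/2$ in Fermi coordinates, Lemma~\ref{appendilemma1}, and the smoothness of $\phi$ on a fixed ($\epsilon$-independent) neighborhood of $\Lambda$. The gap is in the third estimate, where the cancellation you invoke does not occur. At a point of $\Lambda$ (so $z=0$), Lemma~\ref{appendilemma1} gives $g_\circ = \mathring g + dz^2$ block-diagonally, and, as you correctly compute, the Hessian has normal block $\delta_{ij}$ while the mixed and tangential blocks vanish; invariantly, $\nabla^2\phi|_\Lambda = P_N$, the orthogonal projection onto $N_y\Lambda$, and $\Delta\phi|_\Lambda = n$. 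Plugging in,
\[
\frac{1}{n}\,\Delta\phi\,|\nabla v|^2 - \nabla^2\phi(\nabla v,\nabla v)
\;=\;
|\nabla v|^2 - |P_N\nabla v|^2 + O(\epsilon)\,|\nabla v|^2
\;=\;
|P_T\nabla v|^2 + O(\epsilon)\,|\nabla v|^2 ,
\]
where $P_T$ is the projection onto $T_y\Lambda$. The leading term $|P_T\nabla v|^2 = \mathring g^{ab}\partial_{t_a}v\,\partial_{t_b}v$ is $O(1)|\nabla v|^2$, not $O(\epsilon)|\nabla v|^2$: for $v$ with gradient tangential to $\Lambda$ the left side equals $|\nabla v|^2$. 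In your coordinate expansion this term is produced by $\tfrac1n\Delta\phi$ multiplying the \emph{tangential} part of $|\nabla v|^2$, and it has no counterpart in $\nabla^2\phi(\nabla v,\nabla v)$ because, as you yourself observe, the $t_at_b$-block of $\nabla^2\phi$ is only $O(\epsilon)$. Your appeal to the flat identity $\tfrac1n\Delta\phi\,I_n - \nabla^2\phi = 0$ on $\mathbb R^n$ hides exactly this: that model has $k=0$, hence no tangential directions at all.

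In fact the two-sided estimate as written in the lemma is not correct; the true and usable statement is the one-sided bound
\[
\frac{1}{n}\,\Delta\phi\,|\nabla v|^2 - \nabla^2\phi(\nabla v,\nabla v) \;\geq\; -\,C\,\epsilon\,|\nabla v|^2 ,
\]
which is immediate from the display above since $|P_T\nabla v|^2\geq 0$, and which is all that the Pohozaev argument in the proof of Theorem~\ref{nonexistence} actually needs (the corresponding integral enters Lemma~\ref{le:PI-1} with a favorable sign). If you replace the claim of full cancellation by this lower bound and track the signs, the argument closes; the absolute-value bound, which your proof purports to establish, is false and cannot be proved.
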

\begin{proof}
Follows at once from the expansion of the metric in Fermi coordinates, namely exploiting Lemma \ref{appendilemma1} and  Lemma \ref{frmicordinates},
and the fact that in Fermi coordinates we have
\[
\phi : =  \frac{1}{2} \, \mbox{dist} (\cdot, \Lambda)^2= \frac{|z|^2}{2} .
\]
The lemma follows.
\end{proof}

Now, Poincar\'e inequality in $B_\epsilon( \Lambda)$ reads
\begin{lemma}\label{yugvkvghsasassasasssssss2}
There exists a constant $C>0$, such that, for all $\epsilon \in (0,1)$ and all $u \in H^1_0(B_\epsilon (\Lambda)$, we have
\begin{equation}\label{fggfhdhhjjfjjjjjjjjjj}
\int_{B_\epsilon (\Lambda)} u^2 {\rm dvol}_{g_\circ} \leq C\, \epsilon^2 \, \int_{B_\epsilon (\Lambda)} | \nabla u|^2 \,  {\rm dvol}_{g_\circ}.
\end{equation}
\end{lemma}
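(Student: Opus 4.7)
The plan is to reduce the inequality to the standard Poincaré inequality on an $n$-dimensional Euclidean ball by slicing the tube along its normal fibers. I work in Fermi coordinates $(y,z) \in \Omega_\epsilon(\Lambda)$ and exploit three facts already established in the paper: the decomposition $\bar g = \mathring g + g_z$ with $g_z = dz^2$, the comparability of the Euclidean metric $g_\circ$ with $\bar g$ via Lemma~\ref{appendilemma1}, and the relation ${\rm dvol}_{g_\circ} = a \, {\rm dvol}_{\bar g}$ with $|a-1| \leq C\epsilon$.

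First I would pass from $g_\circ$ to $\bar g$. Since $|a-1|\leq C\epsilon$, the two volume forms are mutually bounded by a universal constant for $\epsilon \in (0,1)$; similarly, Lemma~\ref{appendilemma1} ensures $|\nabla_{g_\circ} u|^2$ and $|\nabla_{\bar g} u|^2$ are comparable. Hence it suffices to prove
\[
\int_{B_\epsilon(\Lambda)} u^2 \, {\rm dvol}_{\bar g} \;\leq\; C \, \epsilon^2 \int_{B_\epsilon(\Lambda)} |\nabla_{\bar g} u|^2 \, {\rm dvol}_{\bar g},
\]
for $u\in H^1_0(B_\epsilon(\Lambda))$, and the claim for $g_\circ$ will follow at the price of a larger constant.

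Next, I would apply Fubini and a slicewise Poincaré inequality. For $\bar g = \mathring g + g_z$, the volume form factors as ${\rm dvol}_{\bar g} = {\rm dvol}_{\mathring g}(y) \otimes dz$, and the gradient splits as $|\nabla_{\bar g} u|^2 = |\nabla_{\mathring g} u|^2 + |\nabla_{g_z} u|^2 \geq |\nabla_{g_z} u|^2$. For each fixed $y\in\Lambda$, the function $z\mapsto u(y,z)$ lies in $H^1_0(B_1^n \text{ of radius } \epsilon)$ because $u$ vanishes on $T_\epsilon(\Lambda) = \{|z|=\epsilon\}$. The classical Poincaré inequality on a Euclidean ball of radius $\epsilon$ (which scales as $\epsilon^2$) gives
\[
\int_{|z|<\epsilon} u(y,z)^2 \, dz \;\leq\; C \, \epsilon^2 \int_{|z|<\epsilon} |\nabla_{g_z} u(y,z)|^2 \, dz,
\]
with $C$ depending only on $n$. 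Integrating this over $\Lambda$ against ${\rm dvol}_{\mathring g}$ yields the desired inequality on $B_\epsilon(\Lambda)$ in the metric $\bar g$.

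There is no real obstacle here beyond bookkeeping: the only care needed is to ensure that the boundary condition $u\in H^1_0(B_\epsilon(\Lambda))$ transfers, for almost every $y\in\Lambda$, to $H^1_0$ on the fiber $\{y\}\times B_\epsilon^n$, which is a standard trace/slicing argument via density of smooth compactly supported functions. Combining the slicewise Poincaré inequality with the comparability of the two metrics and their volume forms then produces the inequality \eqref{fggfhdhhjjfjjjjjjjjjj} with a constant $C$ depending on $\Lambda$ and $n$ but not on $\epsilon\in(0,1)$.
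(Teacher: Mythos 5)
Your proof is correct and takes essentially the same route as the paper's: pass between ${\rm dvol}_{g_\circ}$ and ${\rm dvol}_{\bar g}$ using the equivalence of the metrics, slice along the normal fibers of the product metric $\bar g = \mathring g + g_z$, and apply the classical Poincar\'e inequality on the $n$-dimensional ball of radius $\epsilon$ (which scales as $\epsilon^2$). The paper presents the same steps in the reverse order and is a bit more terse, but the content is identical.
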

\begin{proof}
This follows at once from the fact that the Poincar\'e inequality in the unit ball of $\mathbb R^n$ reads
\[
\int_{B_1} u^2 {\rm d}z \leq C\, \int_{B_1} | \nabla u|^2 \,  {\rm d}z ,
\]
and a scaling argument implies that
\[
\int_{B_\epsilon} u^2 {\rm d}z \leq C\, \epsilon^2 \, \int_{B_\epsilon} | \nabla u|^2 \,  {\rm d}z .
\]
Now, using the product metric $\bar g$ on $B_\epsilon (\Lambda)$, this implies that
\[
\int_{B_\epsilon (\Lambda)} u^2 \, {\rm dvol}_{\bar g} \leq C \, \epsilon^2 \, |\Lambda| \, \int_{B_\epsilon (\Lambda)} | \nabla u|_{g_z}^2 \, {\rm dvol}_{\bar g} .
\]
Finally, since the Euclidean metric and the product metrics are equivalent (see \eqref{eq:defa}), we conclude that
\[
\int_{B_\epsilon (\Lambda)} u^2 \, {\rm dvol}_{g_\circ} \leq C' \, \epsilon^2 \, |\Lambda| \, \int_{B_\epsilon (\Lambda)} | \nabla u|_{g_\circ}^2 \, {\rm dvol}_{g_\circ} .
\]
This completes the proof of \eqref{fggfhdhhjjfjjjjjjjjjj}.
\end{proof}

\begin{proof}[Proof of Theorem \ref{nonexistence}.] Using Lemma \ref{yugvkvghsasassasasssssss} and Lemma \ref{yugvkvghsasassasasssssss2}  together with Cauchy-Schwarz inequality we get
\[
\left| \frac{1}{p+1} \int_{B_\epsilon ( \Lambda)} u \, \nabla u \cdot \nabla \Delta \phi \, {\rm dvol}_{g_\circ} \right|Ê \leq  C \, \epsilon \,  \int_{B_\epsilon (\Lambda)} |\nabla u|^2 \, {\rm dvol}_{g_\circ}.
\]
Collecting these, together with the result of Lemma~\ref{le:PI-1}, we conclude that there exists a constant $C >0$ such that
\[
\begin{array}{rllll}
	\displaystyle  \left(   \frac{n-2}{2}\,-\, \frac{n}{p+1} + C \, \epsilon \right) \, \int_{B_\epsilon (\Lambda)}  |\nabla u|^2    \, {\rm dvol}_{g_\circ} \leq 0 ,
\end{array}
\]
since $\nabla \phi \cdot \nu = \epsilon$ on $\partial B_\epsilon (\Lambda)$.   This implies that $u\equiv 0$ provided $\epsilon$ is close enough to $0$ and $p> \frac{n+2}{n-2}$.
\end{proof}

\begin{remark}
To avoid technical complications we have chosen to carry out the proofs of this section only in the case of a power type nonlinearity. However
the same technique can be easily extended to more general nonlinearities.
\end{remark}

\end{document}